\newtheorem{thm}{Theorem}[section]
\newtheorem{corollary}[thm]{Corollary}
\newtheorem{lemma}[thm]{Lemma}
\newtheorem{proposition}[thm]{Proposition}
\theoremstyle{definition}
\newtheorem{remark}[thm]{Remark}
\DeclareMathOperator{\reach}{Reach}
\newcommand{\R}{\mathbb{R}}
\newcommand{\N}{\mathbb{N}}
\newcommand{\Z}{\mathbb{Z}}
\newcommand{\La}{\mathbb{L}}
\newcommand{\eps}{\varepsilon}
\DeclareMathOperator{\indre}{int}
\DeclareMathOperator{\diam}{diam}
\newcommand{\Ha}{\mathcal{H}}
\title{Voronoi-based estimation of Minkowski tensors from finite point samples}
\author{Daniel Hug, Markus Kiderlen, and Anne Marie Svane}
\begin{document}

\maketitle

\begin{abstract} 
Intrinsic volumes and Minkowski tensors have been used to describe the geometry of real world objects. This paper presents an estimator that allows to approximate these quantities from digital images. It is based on a generalized Steiner formula for Minkowski tensors of sets of positive reach. When the resolution goes to infinity, the estimator converges to the true value if the underlying object is a set of positive reach. The underlying algorithm is based on a simple expression in terms of the cells of a Voronoi decomposition associated with the image.



\end{abstract}
\section{Introduction}
Intrinsic volumes, such as volume, surface area, and Euler characteristic, are widely-used tools to capture geometric features of an object; see, for instance, \cite{meckeEtAl,OM,milesSerra}.
Minkowski tensors are tensor valued generalizations of the intrinsic volumes, associating with every sufficiently regular compact set in $\R^d$ a symmetric tensor, rather than a scalar. They carry information about geometric features of the set such as position, orientation, and eccentricity. For instance, the volume tensor -- defined formally in Section \ref{minkowski} --  of rank $0$ is just the volume of the set, while the volume tensors of rank $1$ and $2$ are closely related to the center of gravity and the tensor of inertia, respectively. For this reason, Minkowski tensors are used as shape descriptors in materials science \cite{mickel,aste}, physics \cite{kapfer}, and biology \cite{beisbart,ziegel}. 

The main purpose of this paper is to present estimators that approximate all the Min\-kow\-ski tensors of a set $K$ when only weak information on $K$ is available. More precisely, we assume that a finite set $K_0$ which is close to $K$ in the Hausdorff metric  is known. The estimators are based on the Voronoi decomposition of $\R^d$ associated with the finite set $K_0$, following an idea of M\'{e}rigot et al.\ \cite{merigot}. 
What makes these estimators so interesting is that they are consistent; that is, they converge to the respective Minkowski tensors of $K$ when applied to a sequence  of 
finite approximations  converging to $K$ in the Hausdorff metric. 
We emphasize that the notion of `estimator' is used here in the sense of digital geometry \cite{digital}  meaning `approximation of the true value based on discrete input' and should not be confused with the statistical concept related to the inference from data with random noise. 
The main application we have in mind is the case where $K_0$ is a digitization of $K$. This is detailed in the following. 
	
As data is often only available in digital form, there is a need for estimators that allow us to approximate the {Minkowski} tensors from digital images. In a black-and-white image of a compact geometric object $K\subseteq \R^d$, each pixel (or voxel) is colored black if {its} midpoint belongs to $K$ and white otherwise. Thus, the information about $K$ contained in the image is the set of black pixel (voxel) midpoints $K_0=K\cap a\La$, where $\La$ is the lattice formed by {all} pixel (voxel) midpoints and $a^{-1}$ is the resolution. 
A natural criterion for {the reliability of} a digital estimator is that it yields the correct tensor when $a\to 0_+$. If this property holds for all objects in a given family of sets, for instance, for all sets with smooth boundary, then the estimator is called \emph{multigrid convergent} for this class.

Digital estimators for the scalar Minkowski tensors, that is, for the intrinsic volumes, are widespread in the digital geometry literature; see, e.g.,~\cite{digital,OM,OS} and the references therein. For Minkowski tensors up to rank two, estimators based on binary images are given 
in \cite{turk} for the two-dimensional and in \cite{mecke} for the three-dimensional case. Even for the class of convex sets, multigrid convergence has not been proven for any of the above mentioned estimators. The only exception are volume related quantities. 
Most of the above mentioned estimators are \emph{$n$-local} for some given fixed $n\in \N$. We call an estimator $n$-local if it depends on the image only through the histogram of all   $n\times \dotsm \times n$ configurations of black and white points. For instance, a natural surface area estimator \cite{lindblad} in three-dimensional space scans the image with a voxel cube of size   $2\times 2\times2$ and assigns a surface contribution to each observed configuration. The sum of all contributions is then the surface area estimator, which is clearly $2$-local.  The advantage of $n$-local estimators is that they are intuitive, easy to implement, and the computation time is linear in the number of pixels or voxels. 

However, many  {$n$-local} estimators are not multigrid convergent for convex sets; see \cite{am3} and the detailed discussion in Section \ref{known}.  This implies that many established estimators, like the mentioned one in \cite{lindblad} cannot be multigrid convergent for convex sets. 
All the estimators of 2D-Minkowski tensors in  \cite{turk}  are $2$-local. By the results in \cite{am3}, the estimators  for the perimeter and the Euler characteristic can thus not be multigrid convergent for convex sets. The multigrid convergence of the other estimators has not been investigated. 
The algorithms for 3D-Minkowski tensors in \cite{mecke} have as input a triangulation of the object's boundary, and the way this triangulation is obtained determines whether the resulting estimators are $n$-local or not. There are no known  results on multigrid convergence for these estimators either. Summarizing, to the best of our knowledge, this paper presents for the first time estimators of all Minkowski tensors of arbitrary rank that come with a multigrid convergence proof for a class of sets that is considerably larger than the class of convex sets.

The present work is inspired by \cite{merigot}, and we therefore start by recalling some basic notions from this paper.
For  a nonempty compact set $K$, the authors of \cite{merigot} define a tensor valued measure, which they call the \emph{Voronoi covariance measure},
defined on a Borel set $A\subseteq \R^d$ by 
\begin{equation*}
\mathcal{V}_R(K;A)  =  \int_{ K^R }\mathds{1}_A(p_K(y)) (y-p_K(y))(y-p_K(y))^\top\,dy.
\end{equation*} 
Here, $K^R$ is the set of points at distance at most $R>0$ from $K$ and $p_K$ 
 is the \emph{metric projection} on $K$: the point $p_K(x)$ is the point in $K$ closest to $x$, provided that this closest point is unique. The metric projection of $K$ is well-defined on
$\R^d$ with the possible exception of a set of Lebesgue-measure zero; see, e.g., \cite{fremlin}. 

The paper \cite{merigot} uses the Voronoi covariance measure to determine local features of surfaces. It is proved there that if $K \subseteq \R^3$ is a smooth surface, then
\begin{equation}\label{eigen}
\mathcal{V}_R(K;B(x,r)) \approx \frac{2\pi}{3}R^3r^2\bigg(u(x)u(x)^\top + \frac{r^2}{4}\sum_{i=1,2}k_i(x)^2P_i(x)P_i(x)^\top\bigg),
\end{equation}
where $B(x,r)$ is the Euclidean ball with midpoint $x\in K$ and radius $r$, $u(x)$ is one of the two surface unit normals at $x\in K$, $P_1(x),P_2(x)$ are the principal directions and $k_1(x),k_2(x)$ the corresponding principal curvatures. 
Hence, the eigenvalues and -directions of the Voronoi covariance measure carry information about local curvatures and normal directions.

Assuming that a compact set $K_0$  approximates $K$, \cite{merigot} suggests to estimate $\mathcal{V}_R(K;\cdot) $ by $\mathcal{V}_R(K_0;\cdot)$.
It is shown in that paper that $\mathcal{V}_R(K_0;\cdot)$ converges to $\mathcal{V}_R(K;\cdot)$ in the bounded Lipschitz metric when $K_0 \to K$ in the Hausdorff metric. 
Moreover, if $K_0$ is a finite set, then the Voronoi covariance measure can be expressed in the form
\begin{equation*}
\mathcal{V}_R(K_0;A)  = \sum_{x\in K_0 \cap A} \int_{B(x,R)\cap V_x(K_0) } (y-x)(y-x)^\top \,dy.
\end{equation*} 
Here, $V_x(K_0)$ is the Voronoi cell of $x$ in the Voronoi decomposition  of $\R^d$ associated with $K_0$.
Thus, the estimator which is used to approximate $\mathcal{V}_R(K;A)$ is easily computed. Given the Voronoi cells of $K_0$, each Voronoi cell contributes with a simple integral. 
Figure \ref{fig} (a) shows the Voronoi cells of a finite set of points on an ellipse.  The Voronoi cells are elongated in the normal direction. This is the intuitive reason why they can be used to approximate \eqref{eigen}. 

The Voronoi covariance measure $\mathcal{V}_R(K;A) $ can be identified with a symmetric 2-tensor. In the present work, we explore how 
 natural extensions of the Voronoi covariance measure can be used to estimate general Minkowski tensors. 
The generalizations of the Voronoi covariance measure, which we will introduce, will be called \emph{Voronoi tensor measures}. {We will then show how the Minkowski tensors can be recovered from these}. When we apply the results to digital images, we will work with full-dimensional sets $K$, and the finite point sample $K_0$ is obtained from the representation $K_0=K\cap a\La$ of a digital image of $K$.  The Voronoi cells associated with $K_0=K\cap a\La$ are sketched in Figure~\ref{fig}~(b).  Taking point samples from $K$ with increasing resolution, convergence results will follow  from an easy generalization of the convergence proof in \cite{merigot}. 

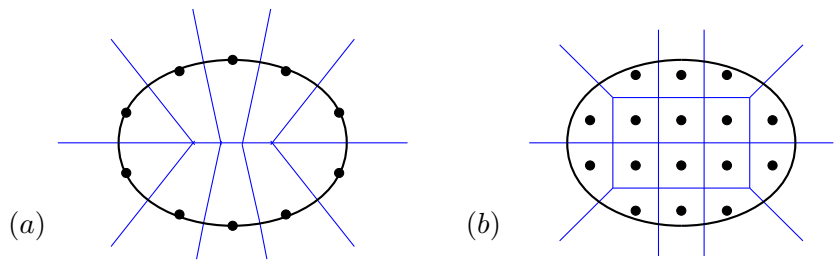
\begin{figure}
\begin{center}
\begin{tikzpicture}
\node at (-0.2,0) {$(a)$};
\node at (5.8,0) {$(b)$};
\draw [fill] (2.5,2.2) circle [radius=0.06];
\draw [blue, thin] (2.125,2.125 ) -- (2.35,1.075);
\draw [blue,thin] (2.125,2.125) -- (1.975,2.875);
\draw [fill] (1.8,2.05) circle [radius=0.06];
\draw [blue, thin] (2.85,2.125 ) -- (2.625,1.075);
\draw [blue,thin] (2.85,2.125) -- (3,2.875);
\draw [fill] (3.2,2.05) circle [radius=0.06];
\draw [blue,thin] (3.55,1.775 ) -- (4.1,2.475);
\draw [blue,thin] (3.55,1.775) -- (3,1.075);
\draw [fill] (3.9,1.5) circle [radius=0.06];
\draw [fill] (3.9,0.7) circle [radius=0.06];
\draw [blue,thin] (3.55,0.425 ) -- (4.1,-0.275);
\draw [blue,thin] (3.55,0.425) -- (3,1.125);
\draw [fill] (3.2,0.15) circle [radius=0.06];
\draw [blue, thin] (2.85,0.075 ) -- (2.625,1.075);
\draw [blue,thin] (2.85,0.075) -- (2.95,-0.442);
\draw [fill] (2.5,0) circle [radius=0.06];
\draw [blue, thin] (2.125,0.075 ) -- (2.35,1.125);
\draw [blue,thin] (2.125,0.075) -- (2.025,-0.442);
\draw [fill] (1.8,0.15) circle [radius=0.06];
\draw [fill] (1.1,0.7) circle [radius=0.06];
\draw [blue,thin] (1.45,0.425 ) -- (0.9,-0.275);
\draw [blue,thin] (1.45,0.425) -- (2,1.125);
\draw [fill] (1.1,1.5) circle [radius=0.06];
\draw [blue,thin] (1.45,1.775 ) -- (0.9,2.475);
\draw [blue,thin] (1.45,1.775) -- (2,1.075);
\draw [blue,thin] (0.2,1.1 ) -- (4.8,1.1);

\draw[thick] (1,1.1) to [out=90,in=180] (2.5,2.2);
\draw[thick] (2.5,2.2) to [out=0,in=90] (4,1.1);
\draw[thick] (4,1.1) to [out=270,in=0] (2.5,0);
\draw[thick] (2.5,0) to [out=180,in=270] (1,1.1);

\draw [fill] (7.2,0.8) circle [radius=0.06];
\draw [fill] (7.2,1.4) circle [radius=0.06];
\draw [fill] (7.8,0.2) circle [radius=0.06];
\draw [fill] (7.8,0.8) circle [radius=0.06];
\draw [fill] (7.8,1.4) circle [radius=0.06];
\draw [fill] (7.8,2) circle [radius=0.06];
\draw [fill] (8.4,0.2) circle [radius=0.06];
\draw [fill] (8.4,0.8) circle [radius=0.06];
\draw [fill] (8.4,1.4) circle [radius=0.06];
\draw [fill] (8.4,2) circle [radius=0.06];
\draw [fill] (9,0.2) circle [radius=0.06];
\draw [fill] (9,0.8) circle [radius=0.06];
\draw [fill] (9,1.4) circle [radius=0.06];
\draw [fill] (9,2) circle [radius=0.06];
\draw [fill] (9.6,0.8) circle [radius=0.06];
\draw [fill] (9.6,1.4) circle [radius=0.06];
\draw [blue,thin] (7.5,0.5) -- (7.5,1.7);
\draw [blue,thin] (8.1,-0.4) -- (8.1,2.6);
\draw [blue,thin] (8.7,-0.4) -- (8.7,2.6);
\draw [blue,thin] (9.3,0.5) -- (9.3,1.7);
\draw [blue,thin] (7.5,0.5) -- (9.3,0.5);
\draw [blue,thin] (6.4,1.1) -- (10.4,1.1);
\draw [blue,thin] (7.5,1.7) -- (9.3,1.7);
\draw [blue,thin] (7.5,0.5) -- (6.8,-0.2);
\draw [blue,thin] (9.3,0.5) -- (10,-0.2);
\draw [blue,thin] (7.5,1.7) -- (6.8,2.4);
\draw [blue,thin] (9.3,1.7) -- (10,2.4);

\draw[thick] (6.9,1.1) to [out=90,in=180] (8.4,2.2);
\draw[thick] (8.4,2.2) to [out=0,in=90] (9.9,1.1);
\draw[thick] (9.9,1.1) to [out=270,in=0] (8.4,0);
\draw[thick] (8.4,0) to [out=180,in=270] (6.9,1.1);

\end{tikzpicture}
\end{center}
\caption{(a). The Voronoi cells of a finite set of points on a surface. (b). A digital image and the associated Voronoi cells.}
\label{fig}
\end{figure}

The paper is structured as follows: In Section~\ref{minkowski}, we recall the definition of Minkowski tensors and the classical 
as well as a local Steiner formula for sets of positive reach. 
In Section~\ref{construction}, we define the  Voronoi tensor measures, discuss how they can be estimated from finite point samples, and explain how the Steiner formula can be used to connect the Voronoi tensor measures with the 
 Minkowski tensors. 
 Section \ref{convergence} is concerned with the convergence of the estimator. The results are specialized to digital images in Section \ref{DI}. 
Finally, the estimator is compared with existing approaches in Section \ref{known}.

\section{Minkowski tensors}\label{minkowski}
We work in Euclidean space $\R^d$ with scalar product $\langle\cdot\,,\cdot\rangle$ and norm $|\cdot|$. The Euclidean 
ball with center $x\in\R^d$ and radius $r\ge 0$ is denoted by $B(x,r)$, and we write $S^{d-1}$ for the unit sphere in $\R^d$. Let $\partial A$ and $\text{int}A$ be the  boundary and the interior of a set $A\subseteq{\mathbb  R}^d$, respectively.
The $k$-dimensional Hausdorff-measure in $\R^d$ is denoted by ${\mathcal H}^k$, $0\le k\le d$. 
Let ${\mathcal C}^d$ be the family of nonempty compact subsets of $\R^d$ and ${\mathcal K}^d\subseteq \mathcal{C}^d$ the subset of nonemtpy compact convex sets. 
For two compact sets $K,M \in{\mathcal C}^d$, we define their \emph{Hausdorff distance} by
\begin{equation*}
d_H(K,M) = \inf\{\eps>0\mid K\subseteq M^\eps, M \subseteq K^\eps\}.
\end{equation*}

Let $\mathbb{T}^p$ denote the space of symmetric $p$-tensors (tensors of rank $p$) over $\R^d$.  
Identifying $\R^d$ with its dual (via the scalar product), a symmetric $p$-tensor defines a symmetric multilinear map $(\R^d)^p\to \R$.
Letting $e_1,\dots,e_d$ be the standard basis in $\R^d$, a tensor $T\in \mathbb{T}^p$ is determined by its coordinates
\begin{equation*}
T_{i_1\dots i_p}=T(e_{i_1},\dots,e_{i_p})
\end{equation*}
with respect to the standard basis, 
for all choices of ${i_1},\dots,{i_p} \in \{1,\dots,d\}$.
We use the norm on $\mathbb{T}^p$ given by
\begin{equation*}
|T|=\sup\big\{|T(v_1,\dots,v_p)| \,\mid \, |v_1|=\dots =|v_p|=1\big\}
\end{equation*}
for $T\in \mathbb{T}^p$. The same definition is used for arbitrary tensors of rank $p$. 

The symmetric tensor product of $y_1,\ldots, y_m\in \mathbb{R}^{d}$ 
	is given by the symmetrization $y_1\odot\cdots\odot y_m=(m!)^{-1}\sum \otimes_{i=1}^m y_{\sigma(i)}$, where the sum extends over all permutations $\sigma$ of $\{1,\ldots,m\}$ and $\otimes$ is the usual tensor product.  We write $x^r$ for the $r$-fold tensor product of $x\in \R^d$. For two symmetric tensors of the form $T_1=y_1 \odot \cdots \odot y_r$ and $T_2=y_{r+1} \odot \cdots \odot y_{r+s}$, 
	where $y_1, \ldots , y_{r+s} \in\R^d$, the symmetric tensor product $T_1\odot T_2$ of $T_1$ and $T_2$, which we often abbreviate by $T_1T_2$, is the symmetric 
	tensor product of $y_1, \ldots, y_{r+s} $. This is extended to general symmetric tensors $T_1$ and $T_2$ by linearity. 
	Moreover, it follows from the preceding definitions that 
	$$
	|y_1\odot\cdots\odot y_m|\le |y_1|\cdots |y_m|,
	$$
$y_1,\ldots, y_m\in \mathbb{R}^{d}$. 

For any compact set $K\subseteq \R^d$, we can define an element of $\mathbb{T}^r$ called the \emph{$r$th volume tensor}
\begin{equation*}
\Phi_{d}^{r,0}(K) = \frac{1}{r!} \int_{K} x^r \,dx.
\end{equation*}
For $s\geq 1$ we define $\Phi_{d}^{r,s}(K)=0$. Some of the volume tensors have well-known physical interpretations. For instance, $\Phi_{d}^{0,0}(K)$ is the usual volume of $K$, $\Phi_{d}^{1,0}(K)$ is up to normalization the center of gravity, and $\Phi_{d}^{2,0}(K)$ is closely related to the tensor of inertia. All three tensors together can be used to find the best approximating ellipsoid of a particle \cite{ziegel}.  The sequence of all volume tensors $(\Phi_{d}^{r,0}(K))_{r=0}^\infty$ determines the compact set $K$ uniquely.
	For convex sets in the plane even the following stability result \cite[Remark 4.4.]{JuliaAstrid} holds: If $K, L\in {\mathcal K}^2$ are contained in the unit square and have coinciding volume tensors up to rank $r$, then their distance, measured in the symmetric difference metric ${\mathcal H}^2\big((K\setminus L) \cup (L\setminus K)\big)$, is of order $O(r^{-1/2})$ as $r\to \infty$. 

We will now define \emph{Minkowski surface tensors}. These can also be used to characterize the shape of an object or the structure of a material as in \cite{beisbart,kapfer}. They require stronger regularity assumptions on $K$. Usually, like in \cite[Section 5.4.2]{schneider}, the set $K$ is assumed to be convex. However, as Minkowski tensors are tensor-valued integrals with respect to the generalized curvature measures (also called support measures) of $K$, they can be defined whenever the latter are available. We will use this to define Minkowski tensors for sets of positive reach.

First, we recall the definition of a set of positive reach and explain how curvature measures of such sets are determined 
(see \cite{Federer59,zahle}). For a compact set $K\in {\mathcal C}^d$, we let $d_K(x)$ denote the distance from $x\in \R^d$ to $K$. Then, for $R\ge 0$,  $K^R=\{x\in \R^d \mid d_K(x)\leq R\}$ is the $R$-parallel set of $K$. The \emph{reach} $\reach(K)$ of $K$ is defined as the supremum over all $R\geq 0$ such that for all $x\in \R^d$ with $d_K(x)<R$ there is a unique closest point $p_K(x)$ in $K$.  We say that $K$ has positive reach if $\reach(K)>0$. Smooth surfaces (of class $C^{1,1}$) are examples of sets of positive reach, and compact convex sets are characterized by having infinite reach. By definition, the map $p_K$ is defined everywhere on $K^R$ if $R<\reach(K)$. 
Let $K\subseteq \R^d$ be a (compact) set of positive reach.  
The  (global) Steiner formula for sets with positive reach states that for all $R<\reach(K)$ the $R$-parallel volume of $K$ is a polynomial, that is,
\begin{align}\label{gloSt}
\Ha^d( K^R){}&=  \sum_{k=0}^d \kappa_{d-k} R^{d-k} \Phi_{k}^{0,0}(K).
\end{align}
 Here $\kappa_j$ is the volume of the unit ball in  $\R^j$ and the numbers 
$\Phi^{0,0}_0(K),\ldots, \allowbreak \Phi_d^{0,0}(K)$ are the so-called \emph{intrinsic volumes} of $K$. They are special cases of the Minkowski tensors to be defined below. Some of them have well-known interpretations. As mentioned, $\Phi^{0,0}_d(K)$ is the volume of $K$. Moreover, $2\Phi^{0,0}_{d-1}(K)$ is the surface area, $\Phi^{0,0}_{d-2}(K)$ is proportional to the total mean curvature, and $\Phi^{0,0}_0(K)$ is the Euler characteristic of $K$. For convex sets, \eqref{gloSt} is the classical Steiner formula which holds for all $R\ge 0$. 

Z\"ahle \cite{zahle} showed that a local version of \eqref{gloSt} can be established giving rise to  the \emph{generalized curvature measures} $\Lambda_k(K;\cdot)$ of $K$, for $k=0,\dots,d-1$. An extension to general closed sets is 
 considered in \cite{last}. The generalized curvature measures (also called support measures) 
are measures on $\Sigma = \R^d\times S^{d-1}$. They are determined by the following {\em local} Steiner formula which holds for all $R < \reach(K)$ and all Borel set $B\subseteq \Sigma$:
\begin{equation}\label{clasSteiner}
\Ha^d\left(\left\{x\in K^R \backslash K \mid \Big(p_K(x), \tfrac{x-p_K(x)}{|x-p_K(x)|}\Big)\in B\right\}\right) = \sum_{k=0}^{d-1} R^{d-k} \kappa_{d-k} \Lambda_k(K;B).
\end{equation}
The coefficients $\Lambda_k(K;B)$ on the right side of  \eqref{clasSteiner} are signed Borel measures $\Lambda_k(K;\cdot)$ evaluated on $B\subseteq\Sigma$. These measures are called the {\em generalized curvature measures}  of $K$. 
 Since the pairs of points in $B$ on the left side of \eqref{clasSteiner} always consist of a boundary point of $K$ and an outer unit normal of $K$ at that point, each of the measures $\Lambda_k(K,\cdot)$ is concentrated on the set of all such pairs. For this reason, the  generalized curvature measures $\Lambda_k(K;\cdot)$, $k\in\{0,\ldots,d-1\}$, are also called {\em support measures}. They describe the local boundary behavior of the part of $\partial K$  that consists of points $x$ with an outer unit normal $u$ such that $(x,u)\in B$. A description of the generalized curvature measures $\Lambda_k(K,\cdot)$  by means 
of generalized curvatures living on the normal bundle of $K$ was first given in \cite{zahle} (see also \cite[\S 2.5 and p.~217]{schneider} and the references given there).   
 	 The total measures $\Lambda_k(K,\Sigma)$ are the intrinsic volumes. 

Based on the generalized curvature measures, for  every $k\in\{0,\dots,d-1\}$, $r,s\geq 0$ and every set $K\subseteq\R^d$ with positive reach, we define the {\em Minkowski tensor}    
\begin{equation*}
\Phi_{k}^{r,s}(K) = \frac{1}{r!s!}\frac{\omega_{d-k}}{\omega_{d-k+s}}\int_{\Sigma} x^r u^{s} \Lambda_k(K;d(x,u)) 
\end{equation*}
in  $\mathbb{T}^{r+s}$. 
Here $\omega_k$ is the surface area of the unit sphere $S^{k-1}$ in $\R^k$. 
More information on Minkowski tensors can for instance be found in \cite{hug,mcmullen,schuster,KVJLNM}.  
As in the case of volume tensors, the Minkowski tensors carry strong information on the underlying set. 
	For instance, already the sequence $(\Phi_{1}^{0,s}(K))_{s=0}^\infty$ determines any $K\in {\mathcal K}^d$ up to a translation. A stability result also holds: if $K$ and $L$ are both contained in a fixed ball and have the same tensors $\Phi_{1}^{0,s}$ of 
	rank $s\le s_0$, then a translation of $K$ is close to $L$ in the Hausdorff metric and the distance is $O(s_0^{-\beta})$ as $s_0\to \infty$ for any $0<\beta<3/(n+1)$; see \cite[Theorem 4.9]{AstridMarkus}.

One can define \emph{local Minkowski tensors} in a similar way (see \cite{HS14}). For a Borel set $B\subseteq \Sigma$, for $k\in\{0,\dots,d-1\}$, $r,s\geq 0$ and a set $K\subseteq\R^d$ with positive reach, we put 
\begin{equation*}
\Phi_{k}^{r,s}(K;B) = \frac{1}{r!s!}\frac{\omega_{d-k}}{\omega_{d-k+s}}\int_{B} x^r u^{s} \,\Lambda_k(K;d(x,u))
\end{equation*}
and, for a Borel set $A \subseteq \R^d$, 
\begin{equation*}
\Phi_{d}^{r,0}(K;A) = \frac{1}{r!} \int_{K\cap A} x^r \,dx.
\end{equation*}
In order to avoid a distinction of cases, we  also write $\Phi_{d}^{r,0}(K;A\times S^{d-1})$ instead of $\Phi_{d}^{r,0}(K;A)$. 
Moreover, we define $\Phi_{d}^{r,s}(K;\cdot)=0$ if $s\ge 1$. 
The local Minkowski tensors can be used to describe local boundary properties. For instance, local 1- and 2-tensors are  used for the detection of sharp edges and corners on surfaces in \cite{clarenz}.  They also carry information about normal directions and principal curvatures  as explained in the introduction. 

We conclude this section with a general remark on continuity properties of the Minkowski tensors. 
	Although the functions $K\mapsto \Phi_{k}^{r,s}(K)$ are continuous when considered in the metric space $(\mathcal{K}^d,d_H)$, they are not continuous on ${\mathcal C}^d$. (For instance, the volume tensors of a finite set are always vanishing, but finite sets can be used to approximate any compact set in the Hausdorff metric.) This is the reason why our approach requires an approximation argument with parallel sets as outlined below. The consistency of our estimator is mainly based on a continuity result for the metric projection map. We quote this result \cite[Theorem 3.2]{chazal} in a slightly different formulation which is symmetric in the two bodies involved. 
	Let  $\|f\|_{L^1(E)}$ be the usual $L^1$-norm of the restriction of $f$ to a Borel  set $E\subseteq \R^d$. 

\begin{proposition}\label{CHAZProp}
	Let $\rho>0$ and let $E\subseteq \R^d$ be a bounded measurable set. Then there is a constant $C_1=C_1\left(d,\diam(E\cup\{0\}),\rho\right)>0$ such that 
	\[
 \|p_K-p_{K_0}\|_{L^1(E)}
	  \le C_1 d_H(K,K_0)^{\frac 12}
	\]
	for all $K,K_0\in {\mathcal C}^d$ with $K,K_0\subseteq B(0,\rho)$. 
\end{proposition}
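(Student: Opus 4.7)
The plan is to rewrite $p_K-p_{K_0}$ as the gradient of a difference of convex potentials and then invoke a general gradient stability estimate for convex functions. Define
\[
\psi_K(x) := \tfrac{1}{2}\bigl(|x|^2 - d_K(x)^2\bigr) = \sup_{y\in K}\Bigl(\langle x,y\rangle - \tfrac{1}{2}|y|^2\Bigr),
\]
and analogously $\psi_{K_0}$. As a supremum of affine functions $\psi_K$ is convex; since $K\subseteq B(0,\rho)$, it is also $\rho$-Lipschitz on $\R^d$, and a direct differentiation shows that $\nabla\psi_K(x)=p_K(x)$ wherever $p_K$ is defined. Hence $p_K-p_{K_0}=\nabla(\psi_K-\psi_{K_0})$ almost everywhere, so the task reduces to controlling $\|\nabla(\psi_K-\psi_{K_0})\|_{L^1(E)}$.

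Using the elementary bounds $|d_K-d_{K_0}|\le d_H(K,K_0)$ and $d_K(x)+d_{K_0}(x)\le 2(|x|+\rho)$, one sees that on any open ball $U=B(0,R)$ with $R$ slightly larger than $\diam(E\cup\{0\})$,
\[
\|\psi_K-\psi_{K_0}\|_{L^\infty(U)} \le (R+\rho)\,d_H(K,K_0) =: \delta,
\]
so $\delta=M\,d_H(K,K_0)$ with $M$ depending only on $\rho$ and $\diam(E\cup\{0\})$.

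The core step is a general $L^1$-gradient stability lemma: if $\phi_1,\phi_2$ are convex and $L$-Lipschitz on $U$ with $\|\phi_1-\phi_2\|_{L^\infty(U)}\le\delta$ and $E\subseteq U$ at positive distance from $\partial U$, then
\[
\int_E |\nabla\phi_1-\nabla\phi_2|^2\,dx \le C\,\delta,
\]
with $C$ depending only on $L$, $|U|$ and $\operatorname{dist}(E,\partial U)$. I would prove this by mollifying both $\phi_i$ into smooth convex approximations, choosing a cut-off $\chi$ supported in $U$ and equal to $1$ on $E$, and integrating by parts to obtain
\[
\int_U \chi\,|\nabla(\phi_1-\phi_2)|^2\,dx = -\!\int_U (\phi_1-\phi_2)\bigl[\nabla\chi\cdot\nabla(\phi_1-\phi_2) + \chi(\Delta\phi_1-\Delta\phi_2)\bigr]\,dx.
\]
The factor $\phi_1-\phi_2$ is bounded pointwise by $\delta$, the gradients by $L$, and convexity gives $\Delta\phi_i\ge 0$ with $\int_U\chi\,\Delta\phi_i\,dx\le L\int_U|\nabla\chi|\,dx$ after one further integration by parts; combining these bounds produces the claimed $\delta$-estimate. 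Cauchy--Schwarz and dominated convergence (letting the mollification parameter tend to zero, using $|\nabla\phi_i|\le L$) then give
\[
\|\nabla\phi_1-\nabla\phi_2\|_{L^1(E)}\le |E|^{1/2}\sqrt{C\,\delta}.
\]

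Applying the lemma to $\phi_i=\psi_K,\psi_{K_0}$ with $\delta=M\,d_H(K,K_0)$ completes the proof and produces the constant $C_1=C_1(d,\diam(E\cup\{0\}),\rho)$. The principal obstacle is the limited regularity of the potentials: $\psi_K$ and $\psi_{K_0}$ are only semiconvex, not $C^2$, so the integration by parts must be justified either by the mollification argument above or in a distributional sense, interpreting $\Delta\psi_i$ as a nonnegative Radon measure on $U$ and absorbing the boundary contributions through the cut-off $\chi$.
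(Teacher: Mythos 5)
Your proof is correct, and it follows the same underlying strategy as the paper -- pass from $p_K-p_{K_0}$ to the gradient of the convex potential $\tfrac12(|x|^2-d_K^2(x))$ and then use stability of gradients of convex functions under an $L^\infty$ perturbation of size $O(d_H(K,K_0))$ -- but it differs in how the key stability step is handled. The paper takes the convex hull $E'$ of $E$, notes that $\partial E'$ is rectifiable, and quotes the result of Chazal, Cohen-Steiner and M\'erigot (\cite[Theorem 3.5]{chazal}) as a black box, which directly yields an $L^1$ bound with constant controlled by $\mathcal{H}^d(E')$ and $\mathcal{H}^{d-1}(\partial E')$; you instead work on a ball $U\supseteq E$ and re-derive the stability estimate from scratch by a Caccioppoli-type argument: cutoff, integration by parts against the (nonnegative) Laplacians of the convex potentials, and Cauchy--Schwarz, which in fact gives the slightly stronger $L^2(E)$ bound of order $d_H(K,K_0)$ before passing to $L^1$. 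Both routes give the same constant dependence $C_1(d,\diam(E\cup\{0\}),\rho)$. Two points you should tighten if you write this out in full: the identity $\nabla\psi_K=p_K$ a.e.\ deserves a proper justification (an envelope/active-set argument for the supremum of affine functions, or simply the citation \cite[Lemma 3.3]{chazal} or \cite[Theorem 4.8]{Federer59} used in the paper), and the mollification limit should be stated carefully (the difference $\psi_K-\psi_{K_0}$ has Lipschitz constant at most $2\rho$, not $\rho$, and one passes to the limit using a.e.\ convergence of the mollified gradients together with the uniform bound and dominated convergence); since $\psi_K$ is globally defined and $\rho$-Lipschitz on $\R^d$, the mollification causes no domain issues.
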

\begin{proof}
	Let $E'$ be the convex hull of $E$ and observe that 
	\begin{equation*}
\|p_K-p_{K_0}\|_{L^1(E)} \leq  \|p_K-p_{K_0}\|_{L^1(E')}.
\end{equation*}	
    It is shown in \cite[Lemma 3.3]{chazal} (see also \cite[Theorem 4.8]{Federer59}) that the map $v_K:\R^d\to\R$ given by $v_K(x)=|x|^2-d_K^2(x)$ is convex and that its gradient coincides almost everywhere with $2p_K$. Since $E'$ has rectifiable boundary, \cite[Theorem~3.5]{chazal} implies 
    that
    \begin{align*}
    \|p_K-p_{K_0}\|_{L^1(E')}
    \le {}& c_1(d) ({\mathcal H}^d(E')+(c_2+\|d_K^2-d_{K_0}^2\|_{\infty,E'}^{\frac 12}){\mathcal H}^{d-1}(\partial E'))\\
		&\times \|d_K^2-d_{K_0}^2\|_{\infty,E'}^{\frac 12}.
    \end{align*}
    Here $c_2=\diam(2p_K(E')\cup 2p_{K_0}(E'))\le 2\diam (K\cup K_0)\le 4\rho$ and the supremum-norm $\|\cdot\|_{\infty,E'}$ on $E'$ can be estimated by 
    \begin{align*}
    \|d_K^2-d_{K_0}^2\|_{\infty,E'}&\le 2\diam(E'\cup K\cup K_0) \|d_K-d_{K_0}\|_{\infty,E'} 
    \\&\le 2\left[\diam(E'\cup\{0\})+2\rho\right]d_H(K,K_0). 
    \end{align*}
		Moreover, intrinsic volumes are increasing on the class of convex sets, so
		\begin{align*}
		\mathcal{H}^d(E'){}&\leq \Ha^d(B(0, \diam(E'\cup \{0\})))\\
		{\mathcal H}^{d-1}(\partial E') {}&\leq \Ha^{d-1}(\partial B(0, \diam(E'\cup \{0\}))).
		\end{align*}
    Together with the trivial estimate $d_H(K,K_0)\le2\rho$ and with the equality $\diam(E\cup\{0\})=\diam(E'\cup\{0\})$, this yields the claim. 
	\end{proof}
	
	The authors of \cite{chazal} argue that the exponent $1/2$ in Proposition \ref{CHAZProp} is best possible.

\section{Construction of the estimator} \label{construction}
In Section \ref{VTM} below, we define the Voronoi tensor measures and show how the Minkowski tensors can be obtained from these. We then explain  in Section \ref{finite} how the Voronoi tensor measures can be estimated from finite point samples. As a special case, we obtain estimators for all intrinsic volumes. This is detailed in Section \ref{intvol}. 

\subsection{The Voronoi tensor measures} \label{VTM}
Let $K$ be a compact set. Here and in the following subsections, we let $r,s\in\N_0$ and $R\ge 0$. 
Define the $ \mathbb{T}^{r+s}$-valued measures $\mathcal{V}_{R}^{r,s}(K;\cdot)$ given on a Borel set $A\subseteq \R^d$ by
\begin{equation}\label{star}
\mathcal{V}_{R}^{r,s}(K;A) = \int_{K^R }\mathds{1}_A(p_K(x)) \,p_K(x)^r(x-p_K(x))^s \, dx.
\end{equation}
When $K$ is a smooth surface, $\mathcal{V}_{R}^{0,2}(K;\cdot)$ {corresponds to} the Voronoi covariance measure in \cite{merigot}. We will refer to the measures defined in  \eqref{star} as the \emph{Voronoi tensor measures}. 
Note that if $f:\R^d \to \R$ is a bounded Borel function, then
\begin{equation}\label{integralf}
\int_{\R^d} f(x) \,\mathcal{V}_{R}^{r,s}(K;dx) = \int_{K^R}f(p_K(x))\,p_K(x)^r(x-p_K(x))^s \, dx \in \mathbb{T}^{r+s}.
\end{equation}

Suppose now that $K$  has positive reach with $\reach(K)>R$. Then a special case of the generalized Steiner formula derived in \cite{last} (or an extension of \eqref{clasSteiner})  implies the following version of the local Steiner formula for the Voronoi tensor measures: 
\begin{align}\nonumber
\mathcal{V}_{R}^{r,s}(K;A) {}&= \sum_{k=1}^{d} \omega_{k} \int_{\Sigma} \int_{0}^R \mathds{1}_{A}(x) t^{s+k-1} x^r u^s \, dt\,  \Lambda_{d-k}(K;d(x,u))\nonumber\\
&\qquad +\mathds{1}_{\{s = 0\}}\int_{K\cap A} x^r \,dx\nonumber\\
&= r!s! \sum_{k=0}^d \kappa_{k+s} R^{s+k} \Phi_{d-k}^{r,s}(K;A\times S^{d-1}),\label{steiner}
\end{align}
where $A\subseteq {\mathbb R}^d$ is a Borel set.  
 In particular, the total measure is
\begin{equation*}
\mathcal{V}_{R}^{r,s}(K)=\mathcal{V}_{R}^{r,s}(K;\R^d) = r!s!\sum_{k=0}^d \kappa_{k+s}  R^{s+k}  \Phi_{d-k}^{r,s}(K) .
\end{equation*}
Note that the special case $r=s=0$ is the Steiner formula \eqref{gloSt} for sets with positive reach.

Equation \eqref{steiner}, used for different parallel distances $R$, can be solved for the Minkowski tensors.
More precisely, choosing $d+1$ different values $0<R_0<\ldots <R_d<\reach(K)$ for $R$, we obtain a system of $d+1$ linear equations:
\begin{align}\label{matrixeq}
\begin{pmatrix}
\mathcal{V}_{R_0}^{r,s}(K;A)\\
\vdots
\\
\mathcal{V}_{R_d}^{r,s}(K;A)
\end{pmatrix}
=r!s!
\begin{pmatrix}\kappa_s R_0^{s} & \dots & \kappa_{s+d}R_0^{s+d} \\
\vdots & & \vdots
\\
\kappa_sR_{d}^{s} & \dots & \kappa_{s+d}R_{d}^{s+d}
\end{pmatrix}
\begin{pmatrix}\Phi_{d}^{r,s}(K;{A\times S^{d-1}})\\
\vdots
\\
\Phi_{0}^{r,s}(K;{A\times S^{d-1}})
\end{pmatrix}.
\end{align}
Since the Vandermonde-type matrix 
		\begin{align}\label{matrixA}
		A_{R_0,\ldots,R_d}^{r,s}
		=
		{r!s!}
		\begin{pmatrix}
		\kappa_s R_0^{s} & \dots & \kappa_{s+d}R_0^{s+d} \\
		\vdots & & \vdots
		\\
		\kappa_s R_{d}^{s} & \dots & \kappa_{s+d}R_{d}^{s+d}
		\end{pmatrix}\in \R^{(d+1)\times(d+1)}
		\end{align}
in \eqref{matrixeq} is invertible, the system can be solved for the tensors, and thus we get
\begin{align}\label{matrix}
\begin{pmatrix}{\Phi}_{d}^{r,s}(K;A{\times S^{d-1}})\\
\vdots
\\
{\Phi}_{0}^{r,s}(K;{A{\times S^{d-1}}})
\end{pmatrix}
=\left(A_{R_0,\ldots,R_d}^{r,s}\right)^{-1}
\begin{pmatrix}
\mathcal{V}_{R_0}^{r,s}(K;A)\\
\vdots
\\
\mathcal{V}_{R_d}^{r,s}(K;A)
\end{pmatrix}.
\end{align}
If $s>0$, then ${\Phi}_{d}^{r,s}(K;A\times S^{d-1})=0$ by definition, so we may omit one of the equations in the system \eqref{matrixeq}.

\subsection{Estimation of Minkowski tensors}\label{finite}
Let $K$ be a compact set of positive reach.  Suppose that we are given a compact set $K_0$ that is close to $K$ in the Hausdorff metric. In the applications we have in mind, $K_0$ is a finite subset of $K$, but this is not necessary for the algorithm to work. Based on $K_0$, we  want to estimate the local Minkowski tensors of $K$. We do this by approximating $\mathcal{V}_{R_k}^{r,s}(K;A)$ in Formula \eqref{matrix} by $\mathcal{V}_{R_k}^{r,s}(K_0;A)$, for $k=0,\dots,d$ and $A\subseteq \R^d$ a Borel set. 
This leads to the following set of estimators for $\Phi_k^{r,s}(K;A\times S^{d-1})$, $k\in\{0,\ldots,d\}$:
\begin{align}
\begin{pmatrix}\hat{\Phi}_{d}^{r,s}(K_0;A\times S^{d-1})\\
\vdots
\\
\hat{\Phi}_{0}^{r,s}(K_0;A\times S^{d-1})
\end{pmatrix}
=\left(A_{R_0,\ldots,R_d}^{r,s}\right)^{-1}
\begin{pmatrix}
\mathcal{V}_{R_0}^{r,s}(K_0;A)\\
\vdots
\\
\mathcal{V}_{R_d}^{r,s}(K_0;A)
\end{pmatrix}\label{defEst}
\end{align}
with $A_{R_0,\ldots,R_d}^{r,s}$ given by \eqref{matrixA}. Setting  $A=\R^d$ in \eqref{defEst}, we obtain estimators 
\[
\hat{\Phi}_{k}^{r,s}(K_0)=\hat{\Phi}_{k}^{r,s}(K_0;\R^d\times S^{d-1})
\]	
of the {intrinsic volumes}.
Note that this approach requires an estimate for the reach of $K$ because we need to choose $0<R_0<\dots<R_d <\reach(K)$. 
The idea to invert the Steiner formula is not new.  It was used in \cite{chazal} to approximate curvature measures of sets of positive reach.  In \cite{spodarev} and \cite{jan} it was used to estimate intrinsic volumes but without proving convergence for the resulting estimator.    

We now consider the case where $K_0$ is finite. Let
\begin{equation*}
V_x(K_0)=\{y\in \R^d \mid p_{K_0}(y)=x\}
\end{equation*}
denote the Voronoi cell of  $x\in K_0$ with respect to the set $K_0$. Since $\R^d$ is the union of the 
finitely many Voronoi cells of  $K_0$, 
it follows that $K^R_0$ is the union of the $R$-bounded parts $B(x,R)\cap V_x(K_0)$, $x\in K_0$, of the  Voronoi cells 
$V_x(K_0)$, $x\in K_0$,  which have pairwise disjoint interiors. Thus \eqref{star} simplifies to 
\begin{equation}\label{algorithm}
\mathcal{V}_{R}^{r,s}(K_0;A)= \sum_{x\in K_0\cap A } x^r \int_{B(x,R)\cap V_x(K_0)}  (y-x)^s \, dy.
\end{equation}
Like the Voronoi covariance measure, the Voronoi tensor measure $\mathcal{V}_{R}^{r,s}(K_0;A)$ is a sum of simple contributions from the individual Voronoi cells.

An example of a Voronoi decomposition associated with a digital image is sketched in Figure~\ref{redblue}. The original set $K$ is the disk bounded by the inner black circle, and the disk bounded by the outer black circle is its $R$-parallel set $K^R$. The finite point sample is $K_0 = K \cap \Z^2$, which is shown as the set of red dots in the picture, and the red curve is the boundary of its $R$-parallel set. The Voronoi cells of $K_0$ are indicated by blue lines. The $R$-bounded part of one of the Voronoi cells is the part that is cut off by the red arc. 

\begin{figure}
\begin{equation*}
\begin{tikzpicture}[scale=1.4]

\draw[red,fill] (2,0.25) circle [radius=0.025];
\draw[red,fill] (2.4,0.25) circle [radius=0.025];
\draw[red,fill] (2.8,0.25) circle [radius=0.025];
\draw[red,fill] (1.6,0.25) circle [radius=0.025];
\draw[red,fill] (1.2,0.25) circle [radius=0.025];
\draw[red,fill] (2,0.65) circle [radius=0.025];
\draw[red,fill] (2.4,0.65) circle [radius=0.025];
\draw[red,fill] (1.6,0.65) circle [radius=0.025];

\draw[red,fill] (2,-0.15) circle [radius=0.025];
\draw[red,fill] (2.4,-0.15) circle [radius=0.025];
\draw[red,fill] (2.8,-0.15) circle [radius=0.025];
\draw[red,fill] (1.6,-0.15) circle [radius=0.025];
\draw[red,fill] (1.2,-0.15) circle [radius=0.025];
\draw[red,fill] (2,-0.55) circle [radius=0.025];
\draw[red,fill] (2.4,-0.55) circle [radius=0.025];
\draw[red,fill] (1.6,-0.55) circle [radius=0.025];
\draw[red,fill] (2.8,-0.55) circle [radius=0.025];
\draw[red,fill] (1.2,-0.55) circle [radius=0.025];
\draw[red,fill] (2,-0.95) circle [radius=0.025];
\draw[red,fill] (2.4,-0.95) circle [radius=0.025];
\draw[red,fill] (1.6,-0.95) circle [radius=0.025];

\draw [red,thin] (1.8,1.45) arc [radius=0.8, start angle=75, end angle= 157]; 
\draw [red,thin] (0.85,1) arc [radius=0.8, start angle=115, end angle= 195]; 
\draw [red,thin] (3.13,0.99) arc [radius=0.8, start angle=22, end angle= 104]; 
\draw [red,thin] (3.55,0.05) arc [radius=0.8, start angle=-15, end angle= 65]; 
\draw [red,thin] (2.2,1.45) arc [radius=0.8, start angle=75, end angle= 105]; 

\draw [red,thin] (0.425,0.05) arc [radius=0.8, start angle=165, end angle= 195]; 
\draw [red,thin] (3.55,-0.35) arc [radius=0.8, start angle=-15, end angle= 15]; 

\draw [red,thin] (1.795,-1.63) arc [radius=0.8, start angle=289, end angle= 211]; 
\draw [red,thin] (0.87,-1.29) arc [radius=0.8, start angle=245, end angle= 165]; 
\draw [red,thin] (3.115,-1.26) arc [radius=0.8, start angle=-28, end angle= -105]; 
\draw [red,thin] (3.55,-0.35) arc [radius=0.8, start angle=15, end angle= -65]; 
\draw [red,thin] (2.2,-1.64) arc [radius=0.8, start angle=-75, end angle= -105]; 
\draw[blue] (0.1,0.05) --(3.9,0.05); 
\draw[blue] (0.1,-0.35) --(3.9,-0.35); 
\draw[blue] (1.4,0.45) --(2.6,0.45); 
\draw[blue] (1.4,-0.75) --(2.6,-0.75); 
\draw[blue] (1.4,0.05) --(1.4,0.45) --(0.2,1.65);
\draw[blue] (1.4,0.05) --(1.4,-0.75) --(0.2,-1.95);
\draw[blue] (1.4,0.45) --(2.6,0.45); 

\draw[blue] (1.8,0.05) --(1.8,1.85); 
\draw[blue] (1.8,0.05) --(1.8,-2.15); 
\draw[blue] (2.2,0.05) --(2.2,1.85); 
\draw[blue] (2.2,0.05) --(2.2,-2.15);
\draw[blue] (2.6,0.05) --(2.6,0.45) --(3.8,1.65);
\draw[blue] (2.6,0.05) --(2.6,-0.75) --(3.8,-1.95);
\draw [black,very thick] (3,-0.15) arc [radius=1, start angle=0, end angle= 360];
\draw [black,thin] (3.8,-0.1) arc [radius=1.8, start angle=0, end angle= 360];

\end{tikzpicture}
\end{equation*}
\caption{The Voronoi decomposition (blue lines) and $R$-parallel set (red curve) associated with a digital image.}
\label{redblue}
\end{figure}
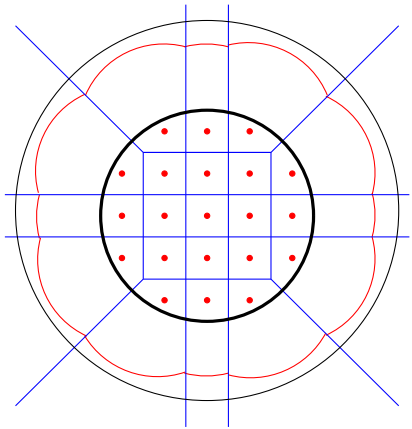

\subsection{The case of intrinsic volumes}\label{intvol}
Recall that $\Phi_k^{0,0}(K)=\Lambda_k(K;\R^d)$ is the $k$th intrinsic volume. Thus, Section \ref{finite} provides estimators for all intrinsic volumes as a special case. This case is particularly simple. The measure $\mathcal{V}_{R}^{0,0}(K;A)$ is simply the volume of a local parallel set
\begin{align*}
\mathcal{V}_{R}^{0,0}(K;A){}&=\Ha^d\left(\{  x \in  K^R\mid p_K(x) \in A\}\right),\\
\mathcal{V}_{R}^{0,0}(K){}&=\Ha^d( K^R).
\end{align*}
In particular, if $K\subseteq \R^d$ is a compact set with $\reach(K)>R$, then 
Equation~\eqref{steiner} reduces to the usual local Steiner formula
\begin{align*}
\Ha^d( \{ x \in  K^R\mid p_K(x) \in A\})&=  \sum_{k=0}^d \kappa_{k} R^{k} \Lambda_{d-k}(K;A\times S^{d-1}),
\end{align*}
and to the (global) Steiner formula \eqref{gloSt} if $A=\R^d$.

In this case, our algorithm approximates the parallel volume $\Ha^d(K^R)$ by $\Ha^d(K_0^R)$. In the example in Figure \ref{redblue}, this corresponds to approximating the volume of the larger black disk by the volume of the region bounded by the red curve. This volume is again the sum of the volumes of the regions bounded by the red and blue curves. In other words, it is the sum of volumes of the $R$-bounded Voronoi cells 
on the right-hand side of the equation
\begin{equation*}
\mathcal{V}_{R}^{0,0}(K_0;A)= \sum_{x\in K_0\cap A } \Ha^d(B(x,R) \cap V_x(K_0)).
\end{equation*}
 
\subsection{Estimators for general local Minkowski tensors}\label{general1}
In Section \ref{finite} we have only considered estimators for local tensors of the form $\Phi_k^{r,s}(K;A \times S^{d-1})$, where $K\subseteq\R^d$ is a set with positive reach. The natural way to estimate $\Phi_k^{r,s}(K;B)$, for a measurable set $B\subseteq \Sigma $, would be to copy the idea in Section \ref{finite}  with $\mathcal{V}_{R}^{r,s}(K;A)$ replaced by the following generalization of the Voronoi tensor measures,
\begin{equation}\label{baddef}
\mathcal{W}_{R}^{r,s}(K;B) =
 \int_{K^R \backslash K}\mathds{1}_B(p_K(x),u_K(x))p_K(x)^r (x-p_K(x))^s \, dx,
\end{equation}
where $u_K(x) = ({x-p_K(x)})/{|x-p_K(x)|}$ estimates the normal direction.  
Of course, this definition works for any $K\in\mathcal{C}^d$. 
Moreover, we could define estimators related to \eqref{baddef} whenever we have a set $K_0$ which approximates $K$. However, even if 
$K$ has positive reach, the map $x\mapsto u_K(x)$ is not Lipschitz on $K^R\backslash K$, and therefore 
 the convergence results in Section \ref{convergence} will not work with this definition. Since the map $x\mapsto u_K(x)$ is Lipschitz on $K^R\backslash K^{R/2}$, it is natural to proceed as follows. 
For any $K\in\mathcal{C}^d$, we define
\begin{align}\label{modify}
\overline{\mathcal{V}}_{R}^{r,s}(K;B) {}&=
 \int_{K^R \backslash K^{R/2}}\mathds{1}_B(p_K(x),u_K(x))p_K(x)^r (x-p_K(x))^s \, dx. 
\end{align}
Note that 
\begin{equation}\label{Wdifference}
\overline{\mathcal{V}}_{R}^{r,s}(K;\cdot)=\mathcal{W}_{R}^{r,s}(K;\cdot)-\mathcal{W}_{R/2}^{r,s}(K;\cdot),
\end{equation} 
where $\mathcal{W}_{R}^{r,s}(K;\cdot)$ is defined as in \eqref{baddef}. We will not use the notation $\mathcal{W}_{R}^{r,s}(K;\cdot)$ in the following.
If $K$ has positive reach and $0<R<\text{reach}(K)$, then the generalized Steiner formula yields 
\begin{align*}
{\overline{\mathcal{V}}_{R}^{r,s}(K;B)}
&= r!s! \sum_{k=1}^d \kappa_{s+k} R^{s+k} (1-2^{-(s+k)}){ {\Phi}_{d-k}^{r,s}(K;B)}.
\end{align*}
Again, choosing $0<R_1<\ldots<R_d<\text{reach}(K)$, we can recover the Minkowski tensors from
\begin{align*}
\begin{pmatrix}{\Phi}_{d-1}^{r,s}(K;B)\\
\vdots
\\
{\Phi}_{0}^{r,s}(K;B)
\end{pmatrix}
=
\left(
\overline{A}_{R_1,\ldots,R_d}^{r,s}
\right)^{-1}
\begin{pmatrix}
\overline{\mathcal{V}}_{R_1}^{r,s}(K;B)\\
\vdots
\\
\overline{\mathcal{V}}_{R_d}^{r,s}(K;B)
\end{pmatrix}
\end{align*}
where 
\[
\overline {A}_{R_1,\ldots,R_d}^{r,s}=
\frac{1}{r!s!}
\begin{pmatrix}
\kappa_{s+1} (1-2^{-(s+1)}) R_1^{s+1} & \dots & \kappa_{s+d}(1-2^{-(s+d)})R_1^{s+d} \\
\vdots & & \vdots
\\
\kappa_{s+1} (1-2^{-(s+1)}) R_{d}^{s+1} & \dots & \kappa_{s+d}(1-2^{-(s+d)})R_{d}^{s+d}
\end{pmatrix}
\]
is a regular matrix.
Using this, we can define estimators for  ${\Phi}_{k}^{r,s}(K;B)$, for $0\le k\leq d-1$, by
\begin{align*}
\begin{pmatrix}\overline{\Phi}_{d-1}^{r,s}(K_0;B)\\
\vdots
\\
\overline{\Phi}_{0}^{r,s}(K_0;B)
\end{pmatrix}
=
\left(
\overline{A}_{R_1,\ldots,R_d}^{r,s}
\right)^{-1}
\begin{pmatrix}
\overline{\mathcal{V}}_{R_1}^{r,s}(K_0;B)\\
\vdots
\\
\overline{\mathcal{V}}_{R_d}^{r,s}(K_0;B)
\end{pmatrix},
\end{align*}
where $K_0$ is a compact set which approximates $K$.  
Convergence of these modified estimators will be discussed in Section \ref{convergence}.

The estimators $\overline{\Phi}_{k}^{r,s}$ can be used to approximate local tensors of the form $\Phi_k^{r,s}(K;B)$ where the set $B\subseteq \Sigma$ involves normal directions. Thus,  they are more general than $\hat{\Phi}_{k}^{r,s}$. However, \eqref{Wdifference} shows that estimating $\overline{\mathcal{V}}_{R}^{r,s}(K;B)$ requires an approximation of two parallel sets, rather than one. We therefore expect more severe numerical errors for $\overline{\Phi}_{k}^{r,s}$.

\section{Convergence properties}\label{convergence}
In this section we prove the main convergence results. This is an immediate generalization of \cite[Theorem 5.1]{merigot}.

\subsection{The convergence theorem}
For a bounded Lipschitz function $f:\R^d \to \R$, we let $|f|_\infty$ denote the usual supremum norm,
\begin{equation*}
|f|_L = \sup \bigg\{ \frac{|f(x)-f(y)|}{|x-y|} \mid x\neq y\bigg\}
\end{equation*}
the Lipschitz semi-norm, and
\begin{equation*}
|f|_{bL}=|f|_L + |f|_\infty
\end{equation*}
the bounded Lipschitz norm. 
Let $d_{bL} $ be the bounded Lipschitz metric on the space of bounded $\mathbb{T}^p$-valued Borel measures on $\R^d$. 
For any  two such measures $\mu$ and $\nu$ on $\R^d$, the distance with respect to $d_{bL}$ is defined by
\begin{equation*}
d_{bL}(\mu,\nu) = \sup \bigg\{\bigg|\int f \, d\mu - \int f \, d\nu\bigg| \mid |f|_{bL} \leq 1\bigg\},
\end{equation*}
where the supremum extends over all bounded Lipschitz functions $f:\R^d\to \R$ with $ |f|_{bL} \leq 1$.  
The following theorem shows that the map																																													
\begin{equation*}
K \mapsto \mathcal{V}_{R}^{r,s}(K;\cdot)
\end{equation*}
is H\"{o}lder continuous with exponent $ \frac{1}{2}$ with respect to the Hausdorff metric on $\mathcal{C}^d$ (restricted to compact subsets of a fixed ball) and the bounded Lipschitz metric. 
In the proof, we use the symmetric difference $A\Delta B=(A\setminus B)\cup(B\setminus A)$ of sets $A,B\subseteq \R^d$. 
\begin{thm}\label{converge}
Let  $R,\rho>0$ and $r,s\in {\mathbb N}_0$ be given. Then there is a positive  constant $C_2=C_2(d,R,\rho,r,s)$ such that
\begin{align*}
d_{bL}(\mathcal{V}_{R}^{r,s}(K;\cdot),\mathcal{V}_{R}^{r,s}(K_0;\cdot )) \leq C_2 d_H(K,K_0)^{\frac{1}{2}} 
\end{align*}
for all compact sets $K,K_0\subseteq B(0,\rho)$.
\end{thm}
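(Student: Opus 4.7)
The plan is to mirror the proof strategy of \cite[Theorem 5.1]{merigot} and extend it to the tensor-valued setting. By the definition of the bounded Lipschitz metric, it suffices to bound, uniformly over $f:\R^d\to\R$ with $|f|_{bL}\le 1$, the tensor norm of
\[
\int f \, d\mathcal{V}_R^{r,s}(K;\cdot) - \int f \, d\mathcal{V}_R^{r,s}(K_0;\cdot).
\]
Using \eqref{integralf}, this difference becomes an integral over $K^R\cup K_0^R$. I would split the domain into the common part $K^R\cap K_0^R$ and the symmetric difference $K^R\Delta K_0^R$, and bound each contribution separately.

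On the common part, a standard telescoping argument, combined with the bounds $|p_K(x)|,|p_{K_0}(x)|\le \rho$, $|x-p_K(x)|,|x-p_{K_0}(x)|\le R$, the inequalities $|f|_\infty,|f|_L\le 1$, and the submultiplicativity of the symmetric tensor product, gives a pointwise estimate of the form
\[
\bigl| f(p_K(x))\,p_K(x)^r (x-p_K(x))^s - f(p_{K_0}(x))\,p_{K_0}(x)^r (x-p_{K_0}(x))^s\bigr| \le C_0\,|p_K(x)-p_{K_0}(x)|
\]
with $C_0=C_0(R,\rho,r,s)$. Integrating over $B(0,\rho+R)$ and applying Proposition~\ref{CHAZProp} produces an $O(d_H(K,K_0)^{1/2})$ bound for this contribution.

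For the symmetric-difference term, the integrand is uniformly controlled by $|f|_\infty\rho^r R^s\le \rho^r R^s$, and using $\|d_K-d_{K_0}\|_\infty\le d_H(K,K_0)=:\eps$ yields the inclusion
\[
K^R\Delta K_0^R \subseteq (K^{R+\eps}\setminus K^R) \cup (K_0^{R+\eps}\setminus K_0^R).
\]
A uniform estimate $\mathcal{H}^d(L^{R+\eps}\setminus L^R)\le C(d,R,\rho)\,\eps$ for compact $L\subseteq B(0,\rho)$---obtainable from the coarea formula combined with a uniform bound on $\mathcal{H}^{d-1}(\{d_L=t\})$ for $t$ close to $R$---then produces an $O(\eps)$ bound. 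Since $\eps=d_H(K,K_0)\le 2\rho$, this is also $O(d_H(K,K_0)^{1/2})$, and combining the two contributions yields the theorem.

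The most delicate step, I expect, is the uniform control of $\mathcal{H}^d(L^{R+\eps}\setminus L^R)$ for compact $L\subseteq B(0,\rho)$ without any reach assumption on $L$; the remaining ingredients---multilinearity, tensor-norm inequalities, and Proposition~\ref{CHAZProp}---are essentially routine once the decomposition is in place.
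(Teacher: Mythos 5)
Your argument is essentially the paper's proof: the same decomposition of the integral into the common part $K^R\cap K_0^R$ and the symmetric difference $K^R\Delta K_0^R$, the same telescoping tensor estimate (the analogue of \eqref{product}) giving a pointwise bound by a constant times $|p_K(x)-p_{K_0}(x)|$, which is then handled by Proposition~\ref{CHAZProp}, and the same uniform bound $\rho^r R^s$ on the integrand over the symmetric difference. The only deviation is how the volume of $K^R\Delta K_0^R$ is controlled: the paper invokes \cite[Corollary 4.4]{chazal} to get $\Ha^d(K^R\Delta K_0^R)\le c\, d_H(K,K_0)$ when $d_H(K,K_0)\le R/2$ (enlarging the constant to cover $R/2\le d_H(K,K_0)\le 2\rho$), whereas you sketch a self-contained derivation via the inclusion into the annuli $L^{R+\eps}\setminus L^R$ together with a uniform bound $\Ha^d(L^{R+\eps}\setminus L^R)\le C(d,R,\rho)\,\eps$ for compact $L\subseteq B(0,\rho)$. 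That bound does hold without any reach assumption on $L$ — for instance it follows from Kneser's inequality $\Ha^d(L^{\lambda t})\le \lambda^d\,\Ha^d(L^{t})$ for $\lambda\ge 1$ applied with $t=R$, $\lambda=(R+\eps)/R$, rather than from a level-set/coarea argument, which would require a perimeter bound for parallel bodies — so the step you flag as delicate is exactly the content of the cited corollary, and your route is correct; it just reproves that ingredient instead of quoting it.
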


\begin{proof} 
Let $f$ with $|f|_{bL} \leq 1$ be given. Then \eqref{integralf} yields
\begin{align}\nonumber
&\bigg|\int_{\R^d} f(x)\, \mathcal{V}_{R}^{r,s}(K;dx)-\int_{\R^d} f(x)\, \mathcal{V}_{R}^{r,s}(K_0;dx)\bigg|\\
&=\bigg|\int_{K^R }f(p_K(x))\,p_K(x)^r(x-p_K(x))^s \, dx\nonumber\\
&\qquad\qquad\qquad\qquad-\int_{K_0^R }f(p_{K_0}(x))p_{K_0}(x)^r(x-p_{K_0}(x))^s \, dx\bigg|
\nonumber
\\
&\leq {I}+{II},\label{AB}
\end{align}
where $I$ is the integral
\begin{align*}	
\int_{K^R \cap K_0^R }|f(p_K(x))p_K(x)^r(x-p_K(x))^s-f(p_{K_0}(x))\,p_{K_0}(x)^r(x-p_{K_0}(x))^s |\,dx
\end{align*}
and 
\begin{align*}	
II={{\rho}^r}R^s \Ha^{d}(K^R \Delta K_0^R). 
\end{align*}
By \cite[Corollary 4.4]{chazal}, there is a constant $c_1=c_1(d,R,\rho)>0$ such that
\begin{equation}\label{symdif}
\Ha^{d}(K^R \Delta K_0^R) \leq c_1\,d_H(K,K_0)
\end{equation}
when $d_H(K,K_0)\leq {R}/{2}$. 
Replacing $c_1$ by a possibly even bigger constant, we can ensure that \eqref{symdif} also holds when 
$R/2\le d_H(K,K_0)\leq 2 \rho$.	Hence, 
\begin{equation}\label{B}
II \leq {c_2}\,d_H(K,K_0)^{\frac 12}
\end{equation}	
with some constant $c_2=c_2(d,R,\rho,r,s)>0$.

Using the inequalities (and interpreting empty products as 1)  
\begin{align}\label{product}
\bigg|\bigodot_{i=1}^m y_i - \bigodot_{i=1}^m z_i\bigg|\leq
\bigg|\bigotimes_{i=1}^m y_i - \bigotimes_{i=1}^m z_i\bigg|\leq \sum_{j=1}^m |y_j-z_j|\prod_{i=1}^{j-1} |y_i| \prod_{i=j+1}^m |z_i|,
\end{align}
with $m=r+s$ and the rank-one tensors 
	\[
	\begin{array}{lcl}
    y_1=\ldots=y_r=p_K(x), &\qquad& y_{r+1}=\ldots=y_{r+s}=x-p_K(x),\\
    z_1=\ldots=z_r=p_{K_0}(x), &\qquad& z_{r+1}=\ldots=z_{r+s}=x-p_{K_0}(x),
	\end{array}
	\]
we get 
\begin{align*}	
|f{}&(p_K(x))\, p_K(x)^r(x-p_K(x))^s-f(p_{K_0}(x))\,p_{K_0}(x)^r(x-p_{K_0}(x))^s | \\
&\leq |f(p_K(x))-f(p_{K_0}(x)) | |p_K(x)|^{r} |x-p_{K}(x)|^s \\
 &+|f(p_{K_0}(x))| \sum_{j=1}^r |p_K(x)-p_{K_0}(x)||p_K(x)|^{j-1} |p_{K_0}(x)|^{r-j}|x-p_{K_0}(x)|^s + \\
 &+|f(p_{K_0}(x))| \sum_{j=1}^s |p_K(x)-p_{K_0}(x)||p_K(x)|^{r} |x-p_{K}(x)|^{j-1}|x-p_{K_0}(x)|^{s-j}. 
\end{align*}
Since we assumed that $|f|_{bL}\le 1$, we get
\begin{align}\nonumber
I&\leq (r+s+1)\max\{\rho,1\}^r\max\{R,1\}^s\int_{K^R \cap K_0^R } |p_K(x)-p_{K_0}(x)|\, dx\\
&\leq   c_3\, d_H(K,K_0)^{\frac{1}{2}}.\label{A}
\end{align}
The existence of the constant $c_3=c_3(d,R,\rho,r,s)$ in the last inequality is guaranteed by Proposition \ref{CHAZProp} with $K^R \cap K_0^R$ as the set $E$, because this choice of $E$ satisfies $\diam(E \cup \{0\})\leq 2(\rho + R)$. 
\end{proof}

When $r=s=0$ and $f=1$, the above proof simplifies to Inequality \eqref{symdif} as $I$ vanishes. Hence we obtain the following strengthening of the theorem, which is relevant for the estimation of intrinsic volumes.

\begin{thm}\label{IVconverge}
Let $R,\rho>0$. 
Then there is a constant $C_3=C_3(d,R,\rho)>0$ such that
\begin{equation*}
\Big|\mathcal{V}_{R}^{0,0}(K)-\mathcal{V}_{R}^{0,0}(K_0) \Big|\leq C_3\, d_H(K,K_0)
\end{equation*}
for all compact sets  $K,K_0\subseteq B(0,\rho)$. 
\end{thm}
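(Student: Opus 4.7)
The plan is to observe that this is precisely the special case of Theorem~\ref{converge} in which the integrand $I$ in the decomposition~\eqref{AB} collapses, so that only the symmetric difference term $II$ needs to be handled, and moreover this term gives a genuinely \emph{linear} bound (rather than a square-root one) because no appeal to Proposition~\ref{CHAZProp} is required.

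First, I would unfold the definition. For a bounded Lipschitz function $f\equiv 1$ one has, from \eqref{integralf},
\[
\mathcal{V}_{R}^{0,0}(K;\R^d) = \int_{K^R}\,dx = \Ha^d(K^R),
\]
and similarly for $K_0$. Hence
\[
\bigl|\mathcal{V}_{R}^{0,0}(K)-\mathcal{V}_{R}^{0,0}(K_0)\bigr|
=\bigl|\Ha^d(K^R)-\Ha^d(K_0^R)\bigr|
\le \Ha^d\bigl(K^R\,\Delta\,K_0^R\bigr).
\]
At this point no Lipschitz-projection argument is needed, since both sides of the integrand difference agree pointwise and contribute zero on $K^R\cap K_0^R$.

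Second, I would invoke the estimate \eqref{symdif} from \cite[Corollary 4.4]{chazal}, which gives a constant $c_1=c_1(d,R,\rho)>0$ with
\[
\Ha^d(K^R\,\Delta\,K_0^R)\le c_1\,d_H(K,K_0)
\qquad\text{whenever } d_H(K,K_0)\le R/2,
\]
for all $K,K_0\subseteq B(0,\rho)$. This handles the small-distance regime directly, without needing to take a square root.

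Finally, I would treat the large-distance regime $d_H(K,K_0)>R/2$ by a trivial volume bound: since $K,K_0\subseteq B(0,\rho)$ we have $K^R\cup K_0^R\subseteq B(0,\rho+R)$, so
\[
\Ha^d(K^R\,\Delta\,K_0^R)\le 2\kappa_d(\rho+R)^d
\le \frac{4\kappa_d(\rho+R)^d}{R}\,d_H(K,K_0).
\]
Taking $C_3:=\max\!\bigl\{c_1,\,4\kappa_d(\rho+R)^d/R\bigr\}$ then yields the claimed inequality for all $K,K_0\subseteq B(0,\rho)$. There is no real obstacle here: the only nontrivial ingredient is the symmetric-difference bound from \cite{chazal}, which was already imported in the proof of Theorem~\ref{converge}, and the argument is essentially a careful bookkeeping of the remark in the paragraph preceding the theorem.
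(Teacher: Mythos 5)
Your proposal is correct and follows essentially the same route as the paper: the paper obtains Theorem~\ref{IVconverge} by specializing the proof of Theorem~\ref{converge} to $r=s=0$ and $f\equiv 1$, where the term $I$ vanishes and only the symmetric-difference estimate \eqref{symdif} from \cite[Corollary 4.4]{chazal} remains, yielding the linear (rather than square-root) bound. Your explicit treatment of the regime $d_H(K,K_0)>R/2$ via the trivial volume bound is just an unpacking of the paper's remark that $c_1$ may be replaced by a larger constant so that \eqref{symdif} holds for all $d_H(K,K_0)\le 2\rho$.
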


For local tensors, the proof of Theorem \ref{converge} can also be adapted to show a convergence result. 

\begin{thm}\label{locallip}
Let $r,s\in\N_0$ and $R>0$. 
If $K_i \to K$ with respect to the Hausdorff metric on ${\mathcal C}^d$, as $i\to \infty$, then $\mathcal{V}_{R}^{r,s}(K_i;A)\to \mathcal{V}_{R}^{r,s}(K;A)$ in the tensor norm, for every Borel set $A\subseteq\R^d$ which satisfies 
\begin{equation}\label{4.3exceptional}
\Ha^d(p_K^{-1}(\partial A)\cap K^R)=0.
\end{equation}
\end{thm}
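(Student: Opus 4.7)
The plan is to prove the theorem by a dominated-convergence argument on a suitably chosen subsequence, and then promote it to convergence of the full sequence via the standard subsequence principle. Since Hausdorff convergence implies that all $K_i$ (for $i$ large) lie in some common ball $B(0,\rho)$, the integrands
$$g_i(x) := \mathds{1}_{K_i^R}(x)\,\mathds{1}_A(p_{K_i}(x))\,p_{K_i}(x)^r\,(x-p_{K_i}(x))^s,$$
and the analogous $g(x)$ for $K$, are all supported in the bounded set $B(0,\rho+R)$ and bounded in tensor norm by $\rho^r R^s$ (because $|x-p_{K_i}(x)|\le R$ on $K_i^R$). Hence the desired conclusion reduces to showing $\int g_i\, dx \to \int g\, dx$ componentwise.

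The two convergence tools I would invoke are already present in the paper. Proposition \ref{CHAZProp}, applied with $E=B(0,\rho+R)$, gives $\|p_K-p_{K_i}\|_{L^1(E)}\to 0$. The bound $\mathcal{H}^d(K^R\triangle K_i^R)\le c_1\, d_H(K,K_i)$ used in the proof of Theorem \ref{converge} (via \cite[Corollary 4.4]{chazal}) gives $\mathds{1}_{K_i^R}\to \mathds{1}_{K^R}$ in $L^1$. Pick any subsequence of $(K_i)$. By a diagonal extraction, I can pass to a further subsequence $(K_{i_k})$ along which both $p_{K_{i_k}}(x)\to p_K(x)$ and $\mathds{1}_{K_{i_k}^R}(x)\to \mathds{1}_{K^R}(x)$ hold for $\mathcal{H}^d$-a.e.\ $x\in B(0,\rho+R)$.

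At any such $x$ with $p_K(x)\notin \partial A$, the indicator $\mathds{1}_A$ is continuous at $p_K(x)$ (because $\R^d\setminus \partial A=\mathrm{int}(A)\cup\mathrm{int}(A^c)$), so $\mathds{1}_A(p_{K_{i_k}}(x))\to \mathds{1}_A(p_K(x))$, and together with the convergence of the polynomial factor one obtains $g_{i_k}(x)\to g(x)$. The exceptional set on which this pointwise convergence could fail is contained in the union of the null set of non-convergence of $p_{K_{i_k}}$ and $\mathds{1}_{K_{i_k}^R}$, together with $p_K^{-1}(\partial A)\cap K^R$, the latter of which is $\mathcal{H}^d$-null by the hypothesis \eqref{4.3exceptional}. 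Dominated convergence (with the uniform bound $\rho^r R^s$ on the bounded set $B(0,\rho+R)$) then yields $\mathcal{V}_R^{r,s}(K_{i_k};A)\to \mathcal{V}_R^{r,s}(K;A)$ in tensor norm. Since every subsequence of $(K_i)$ has such a further subsequence converging to the same limit, the full sequence converges.

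The main obstacle, and precisely what the hypothesis \eqref{4.3exceptional} is designed to handle, is the discontinuity of $\mathds{1}_A$: without a measure-zero condition on $p_K^{-1}(\partial A)\cap K^R$, the indicator $\mathds{1}_A(p_{K_{i_k}}(x))$ may oscillate on a set of positive measure even when $p_{K_{i_k}}(x)\to p_K(x)$, breaking the dominated-convergence step. Everything else (bounding the polynomial factor, passing to a subsequence to get a.e.\ convergence of $p_{K_{i_k}}$, controlling the symmetric difference $K_i^R\triangle K^R$) is directly inherited from ingredients already established in the proof of Theorem \ref{converge}.
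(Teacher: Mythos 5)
Your proof is correct, but it follows a different route than the paper's. The paper argues coordinate-wise: it writes each coordinate of $\mathcal{V}_R^{r,s}(K;\cdot)$ as a signed measure, splits it into positive and negative parts, observes that the proof of Theorem \ref{converge} generalizes to give convergence of these parts in the bounded Lipschitz metric, hence weak convergence, and then invokes the Portmanteau characterization: weakly convergent measures converge on continuity sets of the limit, and hypothesis \eqref{4.3exceptional} makes $A$ exactly such a continuity set. You instead work directly with the integrands: the same two analytic inputs (Proposition \ref{CHAZProp} for $L^1$-stability of $p_K$, and the bound \eqref{symdif} on $\Ha^d(K^R\Delta K_i^R)$ from \cite{chazal}) give $L^1$-convergence of $p_{K_i}$ and of $\mathds{1}_{K_i^R}$, you extract a subsequence with a.e.\ convergence of both, use \eqref{4.3exceptional} to control the discontinuity set of $\mathds{1}_A\circ p_K$ on $K^R$, and conclude by dominated convergence plus the subsequence principle. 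The paper's approach buys economy (it reuses Theorem \ref{converge} essentially as a black box and immediately yields convergence on all continuity sets and against continuous test functions, as in Remark \ref{Rem4.6new}); your approach is more elementary and self-contained at the point of use, avoids the $\pm$-decomposition and the weak-convergence machinery, and makes completely transparent where \eqref{4.3exceptional} enters. Minor cosmetic points only: ``diagonal extraction'' is not needed (two successive subsequence extractions, or summability of the $L^1$ errors along a subsequence, suffice), and one should add the countably many Lebesgue-null sets on which some $p_{K_i}$ or $p_K$ fails to be defined to your exceptional set; neither affects validity.
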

\begin{proof}
Convergence of tensors is equivalent to coordinate-wise convergence.
Hence, it is enough to show that the coordinates satisfy 
$$\mathcal{V}_{R}^{r,s}(K_i;A)_{i_1\dots i_{r+s}}\to \mathcal{V}_{R}^{r,s}(K;A)_{i_1\dots i_{r+s}}\qquad\text{as $i\to\infty$},$$ 
for all choices of indices ${i_1\dots i_{r+s}}$; see the notation at the beginning of Section~\ref{minkowski}. 

We write $T_K(x)=p_K(x)^r(x-p_K(x))^s$. Then
\begin{equation*}
\mathcal{V}_{R}^{r,s}(K;A)_{i_1\dots i_{r+s}}=\int_{K^R} \mathds{1}_A(p_K(x))T_K(x)_{i_1\dots i_{r+s}}\, dx
\end{equation*}
is a signed measure.
Let $T_K(x)_{i_1\dots i_{r+s}}^+$ and $T_K(x)_{i_1\dots i_{r+s}}^-$ denote the positive and negative part of $T_K(x)_{i_1\dots i_{r+s}}$, respectively. Then 
\begin{equation*}
\mathcal{V}_{R}^{r,s}(K;A)^{\pm}_{i_1\dots i_{r+s}}=\int_{K^R} \mathds{1}_A(p_K(x))T_K(x)_{i_1\dots i_{r+s}}^{\pm}\,dx
\end{equation*}
are non-negative measures such that
\begin{equation*}
\mathcal{V}_{R}^{r,s}(K;\cdot)_{i_1\dots i_{r+s}}=\mathcal{V}_{R}^{r,s}(K;\cdot)_{i_1\dots i_{r+s}}^+-\mathcal{V}_{R}^{r,s}(K;\cdot)_{i_1\dots i_{r+s}}^-.
\end{equation*}

The proof of Theorem \ref{converge} can immediately be generalized to show that $\mathcal{V}_{R}^{r,s}(K_i;\cdot)^{\pm}_{i_1\dots i_{r+s}}$ converges to $\mathcal{V}_{R}^{r,s}(K;\cdot)^{\pm}_{i_1\dots i_{r+s}}$  in the bounded Lipschitz norm (as $i\to\infty$), and hence the measures converge weakly. In particular, they converge on every continuity set of 
$\mathcal{V}_{R}^{r,s}(K;\cdot)^{\pm}_{i_1\dots i_{r+s}}$. If $\Ha^d(p_K^{-1}(\partial A)\cap K^R)=0$, then $A$ is such a continuity set. 
\end{proof}

\begin{remark}
Though relatively mild, the condition $\Ha^d(p_K^{-1}(\partial A)\cap K^R)=0$ can be hard to control if $K$ is unknown. It is satisfied if, for instance, $K$ and $A$ are smooth and their boundaries intersect transversely. A special case of this is when $K$ is a smooth surface and $A$ is a small ball centered on the boundary of $K$. This is the case in the application from \cite{merigot} that was described in the introduction. 
Examples where it is not satisfied are when $A=K$ or when $K$ is a polytope intersecting $\partial A$ at a vertex. 
\end{remark}

\begin{remark}\label{Rem4.6new}
Let $f:\R^d\to\R$ be a bounded measurable function. We define 
$$
\mathcal{V}_{R}^{r,s}(K;f):=\int_{\R^d} f(x)\, \mathcal{V}_{R}^{r,s}(K;dx).
$$
Hence $\mathcal{V}_{R}^{r,s}(K;A)=\mathcal{V}_{R}^{r,s}(K;\mathds{1}_A)$ for every Borel set $A\subseteq\R^d$. 
Then, Theorem \ref{locallip} is equivalent to saying that, for all continuous test functions $f:\R^d\to\R$,  
$$
\mathcal{V}_{R}^{r,s}(K_i;f)\to \mathcal{V}_{R}^{r,s}(K;f),\quad \text{as }i\to\infty,
$$
in the tensor norm, 
whenever $K_i \to K$ with respect to the Hausdorff metric on ${\mathcal C}^d$, as $i\to \infty$. 
 Thus, if one is interested in the local behaviour of $\Phi^{r,s}_k(K; \cdot)$ at a neighborhood $A$, like in \cite{merigot}, then
one can study
$$
\Phi^{r,s}_k(K;f):=\int_{\Sigma} f(x)x^ru^s\, \Lambda_k(K;d(x,u)), 
$$
where $f$ is a continuous function with support in $A$. This avoids the extra condition \eqref{4.3exceptional}. 

\end{remark}

As the matrix  $A_{R_0,\ldots,R_d}^{r,s}$ in the definition \eqref{defEst} of $\hat \Phi_k^{r,s}(K_0;A\times S^{d-1})$ does not depend on the set $K_0$, the above results immediately yield a consistency result for the estimation of the Minkowski tensors. We formulate this only for $A=\R^d$.
	\begin{corollary}\label{corNew}
		Let $\rho>0$ and $K$ be a compact subset of $B(0,\rho)$ of positive reach such that $\mathrm{Reach}(K)>R_d>\ldots>R_0>0$. 
		Let $K_0\subseteq B(0,\rho)$ be a compact set. 
		Then there is a  constant  $C_4=C_4(d,R_0,\ldots,R_d,\rho)$ such that 
		\[
		 \left| \hat{\Phi}^{0,0}_k(K_0)-\Phi^{0,0}_k(K)\right|\le C_4\, d_H(K_0,K),
		\]
		for all $k\in\{0,\ldots,d\}$.

		For $r,s\in {\mathbb N}_0$ there is a constant 
		$C_5=C_5(d,R_0,\ldots,R_d,\rho,r,s)$ such that 
		\[
		\left| \hat{\Phi}^{r,s}_k(K_0)-\Phi^{r,s}_k(K)\right|\le C_5\, d_H(K_0,K)^{\frac12},
		\] 
		 for all $k\in\{0,\ldots,d-1\}$. 
		\end{corollary}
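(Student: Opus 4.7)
The plan is to reduce the statement to a direct linear-algebra manipulation of the Steiner-type identity \eqref{matrix} combined with the Hausdorff-continuity results already established (Theorems~\ref{converge} and \ref{IVconverge}). Since $\reach(K)>R_d>\cdots>R_0>0$, applying \eqref{matrix} with $A=\R^d$ gives
\[
\begin{pmatrix}\Phi_{d}^{r,s}(K)\\ \vdots \\ \Phi_{0}^{r,s}(K)\end{pmatrix}
=\bigl(A_{R_0,\ldots,R_d}^{r,s}\bigr)^{-1}
\begin{pmatrix}\mathcal{V}_{R_0}^{r,s}(K)\\ \vdots \\ \mathcal{V}_{R_d}^{r,s}(K)\end{pmatrix},
\]
while \eqref{defEst} with $A=\R^d$ defines the estimator vector for $K_0$ using the same matrix. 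Subtracting, the error vector $\bigl(\hat\Phi_{k}^{r,s}(K_0)-\Phi_{k}^{r,s}(K)\bigr)_{k}$ equals $\bigl(A_{R_0,\ldots,R_d}^{r,s}\bigr)^{-1}$ applied to the vector of differences $\bigl(\mathcal{V}_{R_i}^{r,s}(K_0)-\mathcal{V}_{R_i}^{r,s}(K)\bigr)_{i=0}^{d}$.

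The next step is to bound each component of the right-hand side. Taking the constant test function $f\equiv 1$, which satisfies $|f|_{bL}=1$, in the definition of $d_{bL}$, we have
\[
\bigl|\mathcal{V}_{R_i}^{r,s}(K_0)-\mathcal{V}_{R_i}^{r,s}(K)\bigr|\le d_{bL}\bigl(\mathcal{V}_{R_i}^{r,s}(K;\cdot),\mathcal{V}_{R_i}^{r,s}(K_0;\cdot)\bigr),
\]
so Theorem~\ref{converge} yields a bound of the form $C_2(d,R_i,\rho,r,s)\,d_H(K,K_0)^{1/2}$ for each $i\in\{0,\ldots,d\}$. In the intrinsic-volume case $r=s=0$, the stronger Theorem~\ref{IVconverge} applies and produces the linear bound $C_3(d,R_i,\rho)\,d_H(K,K_0)$.

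Finally, the matrix $A_{R_0,\ldots,R_d}^{r,s}$ is a fixed invertible matrix whose entries depend only on $d$, $r$, $s$ and on $R_0,\ldots,R_d$; hence the operator norm of its inverse is a constant $M=M(d,R_0,\ldots,R_d,r,s)$. Combining this with the componentwise bounds above gives
\[
\bigl|\hat\Phi_{k}^{r,s}(K_0)-\Phi_{k}^{r,s}(K)\bigr|\le M\cdot(d+1)\cdot\max_{0\le i\le d}\bigl|\mathcal{V}_{R_i}^{r,s}(K_0)-\mathcal{V}_{R_i}^{r,s}(K)\bigr|,
\]
from which both asserted inequalities follow, with $C_4$ obtained from the $r=s=0$ route (linear in $d_H$) and $C_5$ from the general route (square-root in $d_H$).

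The argument is essentially routine once the correct bookkeeping is in place; the only minor subtlety is observing that Theorem~\ref{converge} is stated for the bounded Lipschitz metric between measures on $\R^d$, but the total-mass tensor norm $|\mathcal{V}_{R_i}^{r,s}(K_0)-\mathcal{V}_{R_i}^{r,s}(K)|$ is dominated by $d_{bL}$ simply by testing against $f\equiv 1$. No additional properties of $K_0$ (such as positive reach) are needed.
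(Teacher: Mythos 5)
Your proposal is correct and follows essentially the same route as the paper, which treats the corollary as an immediate consequence of Theorems~\ref{converge} and \ref{IVconverge} together with the fact that the matrix $A_{R_0,\ldots,R_d}^{r,s}$ is a fixed invertible matrix independent of $K_0$. Your write-up merely makes explicit the routine details (subtracting \eqref{matrix} from \eqref{defEst}, bounding the inverse matrix by its operator norm, and dominating the total-mass differences by $d_{bL}$ via the test function $f\equiv 1$), all of which are sound.
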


Finally, we state the convergence results for the modified estimators	for $\Phi_k^{r,s}(K;B)$, where $B\subseteq \Sigma$ 
is a Borel set, that were defined in Section \ref{general1}.
The map $x\mapsto {x}/{|x|}$ is Lipschitz on $\R^d \backslash \indre({B(0,{R}/{2})})$ with Lipschitz constant ${4}/{R}$, and therefore the mapping $u_K$, which was defined after \eqref{baddef}, satisfies
\begin{equation*}
|u_K(x)-u_{K_0}(x)|\leq \tfrac{4}{R}|p_K(x)-p_{K_0}(x)|,
\end{equation*}
for $x\in (K^R \backslash K^{R/2}) \cap(K_0^R \backslash K_0^{R/2})$. Moreover,
\begin{equation*}
\left(K^R \backslash K^{R/2}\right) \Delta \left(K_0^R \backslash K_0^{R/2}\right) 
\subseteq \left(K^R \Delta K_0^{R}\right) \cup \left(K^{R/2} \Delta K_0^{R/2}\right).
\end{equation*}
Using this, it is straightforward to generalize the proofs of Theorems \ref{converge} and \ref{locallip} to obtain the following  result. 

\begin{thm}\label{convergeloc2}
Let  $R,\rho>0$ and $r,s\in {\mathbb N}_0$ be given. Then there is a positive  constant $C_6=C_6(d,R,\rho,r,s)$ such that
\begin{align*}
d_{bL}(\overline{\mathcal{V}}_{R}^{r,s}(K;\cdot),\overline{\mathcal{V}}_{R}^{r,s}(K_0;\cdot )) \leq C_6 d_H(K,K_0)^{\frac{1}{2}} 
\end{align*}
for all compact sets $K,K_0\subseteq B(0,\rho)$.
\end{thm}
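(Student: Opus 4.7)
The plan is to parallel the proof of Theorem \ref{converge} with two key modifications: the domain of integration is now the annular set $K^R\setminus K^{R/2}$ rather than $K^R$, and the integrand carries an additional factor involving the unit vector $u_K(x)=(x-p_K(x))/|x-p_K(x)|$ because bounded Lipschitz test functions now live on $\Sigma=\R^d\times S^{d-1}$. I would fix such a test function $f$ with $|f|_{bL}\le 1$ and seek to bound
$$
\left|\int f(x,u)\,\overline{\mathcal{V}}_R^{r,s}(K;d(x,u))-\int f(x,u)\,\overline{\mathcal{V}}_R^{r,s}(K_0;d(x,u))\right|
$$
by splitting the difference into a \emph{domain-mismatch} term $II$ (where only one of the two integrands is defined) and an \emph{integrand-mismatch} term $I$ on the common annular region $E:=(K^R\setminus K^{R/2})\cap(K_0^R\setminus K_0^{R/2})$.

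For $II$, the inclusion stated just before the theorem,
$$(K^R\setminus K^{R/2})\Delta(K_0^R\setminus K_0^{R/2})\subseteq (K^R\Delta K_0^R)\cup(K^{R/2}\Delta K_0^{R/2}),$$
together with \cite[Corollary 4.4]{chazal} (applied to the parallel distances $R$ and $R/2$), and the trivial bounds $|f|\le 1$, $|p_K(x)|\le\rho$, $|x-p_K(x)|\le R$, yields $II\le c_1\, d_H(K,K_0)$, which is of smaller order than the desired bound (and can be absorbed up to a constant by the estimate $d_H(K,K_0)\le 2\rho$).

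For $I$, I would use the product inequality \eqref{product} with $m=r+s+1$ choices of rank-one factors
$y_1,\ldots,y_r=p_K(x)$, $y_{r+1},\ldots,y_{r+s}=x-p_K(x)$, and the extra factor needed to control the $u$-argument of $f$, together with the two Lipschitz estimates
$$
|p_K(x)-p_{K_0}(x)| \quad\text{and}\quad |u_K(x)-u_{K_0}(x)|\le \tfrac{4}{R}|p_K(x)-p_{K_0}(x)|,
$$
the second holding precisely on $E$ since the excluded set $K^{R/2}$ keeps $|x-p_K(x)|\ge R/2$ bounded away from zero. Splitting $|f(p_K,u_K)-f(p_{K_0},u_{K_0})|\le |f|_L(|p_K-p_{K_0}|+|u_K-u_{K_0}|)$ and bounding all remaining factors by the constants $\rho$ and $R$, every term in the resulting expansion is dominated by a constant multiple of $|p_K(x)-p_{K_0}(x)|$. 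Integrating over $E$ and invoking Proposition \ref{CHAZProp} with this choice of $E$ (for which $\diam(E\cup\{0\})\le 2(\rho+R)$) gives $I\le c_2\, d_H(K,K_0)^{1/2}$, completing the proof with $C_6=c_1\cdot(2\rho)^{1/2}+c_2$.

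The main obstacle, and the reason the modified definition \eqref{modify} excludes $K^{R/2}$, is precisely the control of $u_K$: the map $u_K$ fails to be Lipschitz on all of $K^R\setminus K$ because $|x-p_K(x)|$ can be arbitrarily small, so without the inner exclusion one could not estimate $|u_K-u_{K_0}|$ uniformly against $|p_K-p_{K_0}|$. Everything else is a bookkeeping extension of the proof of Theorem~\ref{converge}, where care must be taken that the Lipschitz constant $4/R$ and the extra symmetric-difference term $K^{R/2}\Delta K_0^{R/2}$ are absorbed into the constant $C_6$, which is why $C_6$ depends on $R$ (and $\rho$, $r$, $s$, $d$) but not on $K$ or $K_0$.
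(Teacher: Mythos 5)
Your proposal is correct and follows essentially the same route as the paper, which itself only sketches this argument: it records exactly the two ingredients you use (the bound $|u_K(x)-u_{K_0}(x)|\le \tfrac{4}{R}|p_K(x)-p_{K_0}(x)|$ on the common annular region and the inclusion of $(K^R\setminus K^{R/2})\Delta(K_0^R\setminus K_0^{R/2})$ in $(K^R\Delta K_0^R)\cup(K^{R/2}\Delta K_0^{R/2})$) and then declares the generalization of the proof of Theorem \ref{converge} straightforward. Your splitting into the terms $I$ and $II$, the double application of \cite[Corollary 4.4]{chazal}, and the use of Proposition \ref{CHAZProp} with $E=(K^R\setminus K^{R/2})\cap(K_0^R\setminus K_0^{R/2})$ is precisely that intended generalization.
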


This in turn leads to the next convergence result.

\begin{thm}
Let $r,s\in\N_0$ and $R>0$.  
If $K,K_i\in \mathcal{C}^d$ are compact sets such that $K_i\to K$ in the Hausdorff metric, as $i\to\infty$,  then $\overline{\mathcal{V}}_{R}^{r,s}(K_i;B)$ converges to $\overline{\mathcal{V}}_{R}^{r,s}(K;B)$ in the tensor norm, for any measurable set $B\subseteq \Sigma$ satisfying  
\begin{equation*}
\Ha^d(\{x\in K^R \mid (p_K(x), u_K(x))\in \partial B\})=0.
\end{equation*}
Here $\partial B$ is the boundary of $B$ as a subset of $\Sigma$. 

If $B$ satisfies this condition and $\text{Reach}(K)>R_d$, then
\begin{equation*}
\lim_{i \to 0} \overline{\Phi}_{k}^{r,s}(K_i;B) = {\Phi_{k}^{r,s}(K;B)} . 
\end{equation*}
\end{thm}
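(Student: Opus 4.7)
The plan is to mimic closely the proof of Theorem \ref{locallip}, with Theorem \ref{convergeloc2} replacing Theorem \ref{converge}. First I would reduce tensor-norm convergence to coordinate-wise convergence, writing
\[
\overline{\mathcal{V}}_{R}^{r,s}(K;B)_{i_1\dots i_{r+s}}=\int_{K^R\setminus K^{R/2}} \mathds{1}_B(p_K(x),u_K(x))\,T_K(x)_{i_1\dots i_{r+s}}\, dx,
\]
where $T_K(x)=p_K(x)^r(x-p_K(x))^s$. For each fixed index tuple I would split the integrand into its positive and negative parts, obtaining two non-negative finite Borel measures $\overline{\mathcal{V}}_{R}^{r,s}(K;\cdot)^{\pm}_{i_1\dots i_{r+s}}$ on $\Sigma=\R^d\times S^{d-1}$.

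Next, I would note that the argument of Theorem \ref{convergeloc2} applies verbatim when integrated against a bounded Lipschitz function on $\Sigma$ (the only new ingredient beyond Theorem \ref{converge} is the Lipschitz property of $u_K$ on $K^R\setminus K^{R/2}$, which was already used in \ref{convergeloc2}). Applied separately to each of the positive/negative parts above, this yields bounded Lipschitz convergence on $\Sigma$, and hence weak convergence of each non-negative measure $\overline{\mathcal{V}}_{R}^{r,s}(K_i;\cdot)^{\pm}_{i_1\dots i_{r+s}}$ to $\overline{\mathcal{V}}_{R}^{r,s}(K;\cdot)^{\pm}_{i_1\dots i_{r+s}}$ as $i\to\infty$. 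By the portmanteau theorem, $\mu_i(B)\to\mu(B)$ for every Borel set $B\subseteq\Sigma$ whose boundary $\partial B$ has zero $\mu$-mass. Since $|T_K(x)_{i_1\dots i_{r+s}}^{\pm}|\le \rho^r R^s$ is bounded, the hypothesis $\Ha^d(\{x\in K^R\mid (p_K(x),u_K(x))\in\partial B\})=0$ forces $\overline{\mathcal{V}}_{R}^{r,s}(K;\partial B)^{\pm}_{i_1\dots i_{r+s}}=0$, so the portmanteau hypothesis is satisfied for every coordinate and every sign. Recombining signs and tuples gives the asserted tensor-norm convergence.

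The second statement is then essentially algebra: since $\overline{A}_{R_1,\ldots,R_d}^{r,s}$ is independent of the underlying set and invertible, the definition of $\overline{\Phi}_k^{r,s}(K_i;B)$ is a fixed linear combination of the $\overline{\mathcal{V}}_{R_j}^{r,s}(K_i;B)$, which converge by the first part. When $\mathrm{Reach}(K)>R_d$, the generalized Steiner formula displayed above Section \ref{convergence} expresses the same linear combination of $\overline{\mathcal{V}}_{R_j}^{r,s}(K;B)$ as exactly $\Phi_k^{r,s}(K;B)$. Passing to the limit in the linear inversion completes the proof.

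I expect the main (modest) obstacle to be the bookkeeping to justify that the portmanteau hypothesis is correctly transferred from the scalar-valued non-negative coordinate measures to the tensor-valued $\overline{\mathcal{V}}_R^{r,s}(K;\cdot)$; all the analytic work has already been done in Theorems \ref{converge} and \ref{convergeloc2}. In particular, one should check that the integration domain $K^R\setminus K^{R/2}$ (rather than $K^R$) in the definition of $\overline{\mathcal{V}}_R^{r,s}$ causes no issue: the hypothesis is stated over $K^R$, which is only a stronger requirement, so nothing needs to be adjusted.
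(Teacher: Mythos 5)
Your proposal is correct and follows essentially the same route the paper intends: the theorem is stated there without explicit proof as an immediate adaptation of Theorem \ref{locallip}, using Theorem \ref{convergeloc2} (i.e.\ the Lipschitz property of $u_K$ on $K^R\setminus K^{R/2}$ and the symmetric-difference inclusion) in place of Theorem \ref{converge}, followed by coordinatewise positive/negative decomposition, weak convergence, portmanteau on the continuity set $B$, and inversion of the fixed regular matrix $\overline{A}_{R_1,\ldots,R_d}^{r,s}$. The only small point worth making explicit is that for the second assertion the continuity-set condition is needed for each radius $R_1,\ldots,R_d$, which indeed follows from the stated condition for the largest radius since $K^{R_j}\subseteq K^{R_d}$.
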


\begin{remark}
We can argue as in Remark \ref{Rem4.6new} to see that if $K,K_i\in \mathcal{C}^d$ are compact sets such that $K_i\to K$ in the Hausdorff metric, as $i\to\infty$, then 
$$
\overline{\mathcal{V}}_{R}^{r,s}(K_i;g)\to \overline{\mathcal{V}}_{R}^{r,s}(K;g),\quad \text{as }i\to\infty,
$$
whenever $g:\Sigma\to\R$ is a continuous test function and $\overline{\mathcal{V}}_{R}^{r,s}(K;g)$ is defined similarly as before. 

If $K$ satisfies $\text{Reach}(K)>R_d$, we get $\overline{\Phi}_{k}^{r,s}(K_i;g) \to {\Phi_{k}^{r,s}(K;g)}$, as $i\to\infty$. 
\end{remark}

\section{Application to digital images}\label{DI}
Our main motivation for this paper is the estimation of Minkowski tensors from digital images. Recall that we model a black-and-white digital image of $K\subseteq \R^d$ as the set $K\cap a\La$, where $\La\subseteq \R^d$ is a fixed lattice and $a>0$. We refer to \cite{barvinok02} for 
basic information about lattices.

The lower dimensional parts of $K$ are generally invisible in the digital image. When  dealing with digital images, we will therefore always assume that the underlying set is topologically regular, which means that it is the closure of its own interior. 

In digital stereology,  the underlying object $K$ is often assumed to belong to one of the following two set classes: 
\begin{itemize}
\item
 $K$ is called \emph{$\delta$-regular} if it is topologically regular and the reach of its closed complement ${\rm cl}({\R^d \backslash K})$ and the reach of $K$ itself are both at least $\delta>0$.  This is a kind of smoothness condition on the boundary, ensuring in particular that $\partial K$ is a $C^1$ manifold (see the discussion after Definition 1 in \cite{svane15b}). 
\item  $K$ is called \emph{polyconvex} if it is a finite union of compact convex sets. While convex sets have infinite reach, note that polyconvex sets do generally not have positive reach. Also note that for a compact convex set $K\subseteq\R^d$, the set ${\rm cl}({\R^d \backslash K})$ need not have positive reach.
\end{itemize}
It should be observed that for a compact set $K\subseteq \R^d$ both assumptions imply that the boundary of $K$ is a $(d-1)$-rectifiable set in the sense of \cite{Federer69} (i.e., $\partial K$ is the image of a bounded subset of $\R^{d-1}$ under a Lipschitz map), which is a much weaker property that will be sufficient for the analysis in Section \ref{volten}. 

\subsection{The volume tensors}\label{volten}
Simple and efficient estimators for the volume tensors $\Phi_d^{r,0}(K)$ of a (topologically regular) compact set $K$ are already known
and are usually based on the approximation of $K$ by the union of all pixels (voxels) with midpoint in $K$. This leads to the estimator
\begin{equation*}
\phi_d^{r,0}(K\cap a\La ) = \frac1 {r!}  \sum_{z \in K\cap a\La} \int_{z+aV_0(\La)}x^r\,dx,
\end{equation*}
where $V_0(\La)$ is the Voronoi cell of 0 in the Voronoi decomposition generated by $\La$. 
This, in turn, can be approximated by
\begin{equation*}
\hat{\phi}_d^{r,0}(K\cap a\La ) =  \frac{a^{d}}{r!} \Ha^d\left(V_0(\La)\right) \sum_{z \in K\cap a\La} z^r.
\end{equation*}
When $r\in \{0,1\}$, we even have ${\phi}_d^{r,0}(K\cap a\La )=\hat{\phi}_d^{r,0}(K\cap a\La )$. 

Choose $C>0$  such that $V_0(\La) \subseteq B(0,C)$. Then 
$$
K\Delta \bigcup_{z\in K\cap a\La} (z+aV_0(\La))\subseteq (\partial K)^{ aC}.
$$ 
In fact, if $x\in \left[\bigcup_{z\in K\cap a\La} (z+aV_0(\La))\right]\setminus K$, then there is some $z\in K\cap a\La$ such that 
$x\in z+aV_0(\La)$ and $x\notin K$. Since $z\in K$ and $x\notin K$, we have $[x,z]\cap\partial K\neq\emptyset$. Moreover, $x-z\in aV_0(\La)\subseteq  
B(0,aC)$, and hence $|x-z|\le aC$. This shows that $x\in(\partial K)^{aC}$. Now assume that $x\in K$ and $x\notin (\partial K)^{aC}$. Then $B(x,\rho)\subseteq K$ for some $\rho>aC$. Since $\bigcup_{z\in a\La}(z+aV_0(\La))=\R^d$, there is some $z\in a\La$ such that 
$x\in z+aV_0(\La)$. Hence $x-z\in aV_0(\La)\subseteq B(0,aC)$. We conclude that $z\in B(x,aC)\subseteq K$, therefore $z\in K\cap a\La$ and thus
$x\in \bigcup_{z\in K\cap a\La} (z+aV_0(\La))$. 

Hence
\begin{equation}\label{Oabound}
|{\phi}_d^{r,0}(K\cap a\La ) - {\Phi}_d^{r,0}(K)| \leq \frac{1} {r!}   \int_{(\partial K)^{ aC}}|x|^r \, dx.
\end{equation}
If $\Ha^{d}(\partial K)=0$, then the integral on the right-hand side goes to zero by monotone convergence, so 
\begin{equation}\label{convzero}
\lim_{a\to 0_+}{\phi}_d^{r,0}(K\cap a\La ) ={\Phi}_d^{r,0}(K).
\end{equation}
If $\partial K$ is  $(d-1)$-rectifiable in the sense of \cite[Section 3.2.14]{Federer69}, that is, $\partial K$ is the image of a bounded subset of $\R^{d-1}$ under a Lipschitz map, then $\Ha^{d}(\partial K)=0$. Since $\partial K$ is compact, \cite[Theorem 3.2.39]{Federer69} implies that  $\lim_{a\to 0_+}\Ha^d((\partial K)^{ aC})/a $ exists and equals a fixed multiple of $\Ha^{d-1}(\partial K)$ which is finite. Hence, \eqref{Oabound} shows that the speed of convergence in \eqref{convzero} is $O(a)$ as $a\to 0_+$.

Inequality \eqref{product} yields that $|x^r-z^{r}|\leq aC r(|x|+aC)^{r-1}$ whenever $x\in z+ aV_0(\La)$ and $r\ge 1$. Therefore, 
\begin{align*}
|\hat{\phi}_d^{r,0}(K\cap a\La ) - \phi_d^{r,0}(K\cap a \La)|{}& \leq \frac{aC } {(r-1)!}  \sum_{z \in K\cap a\La} \int_{z+aV_0(\La)}(|x|+aC)^{r-1}\, dx\\
& \leq \frac{aC } {(r-1)!} \int_{K^{aC}} (|x|+aC)^{r-1} \,dx,
\end{align*}
which shows that
\begin{equation*}
\lim_{a\to 0_+}\hat{\phi}_d^{r,0}(K\cap a\La ) ={\Phi}_d^{r,0}(K),
\end{equation*}
provided that $\mathcal{H}^d(\partial K)=0$. If $\partial  K$ is $(d-1)$-rectifiable, then the speed of convergence is of the order $O(a)$. 

Hence, we suggest to simply use the estimators $\hat{\phi}_d^{r,0}(K\cap a\La )$ for the volume tensors. This estimator can be computed much faster and more directly than $\hat{\Phi}_d^{r,0}(K\cap a\La )$. Moreover, it does not require an estimate for the reach of $K$, and it converges for a much larger class of sets than those of positive reach.

\subsection{Convergence for digital images}
For the estimation of the remaining tensors we suggest to use the Voronoi tensor measures. Choosing $K_0=K \cap a\La$ in \eqref{algorithm}, 
we obtain
\begin{equation}\label{algorithm2}
\mathcal{V}_{R}^{r,s}(K\cap a\La ;A)= \sum_{x\in K  \cap a\La \cap A } x^r \int_{B(x,R)\cap V_x(K\cap a\La)}  (y-x)^s \,dy,
\end{equation}
where $A\subseteq\R^d$ is a Borel set. 

To show some convergence results in Corollary \ref{convercor} below, we first note that the digital image converges to the original set in the Hausdorff metric. 

\begin{lemma}\label{dHbounds}
If $K$ is compact and topologically regular, then 
\begin{equation*}
\lim_{a\to 0_+} d_H(K,K\cap a\La) = 0.
\end{equation*}
If $K $ is $\delta$-regular, then $d_H(K,K\cap a\La)$ is of order $O(a)$. The same holds if $K$ is topologically regular and polyconvex. 
\end{lemma}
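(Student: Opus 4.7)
\textbf{Plan for Lemma~\ref{dHbounds}.} The task reduces to showing $\sup_{x\in K}\mathrm{dist}(x,K\cap a\La)\to 0$ (respectively $O(a)$), since $K\cap a\La\subseteq K$ makes the other half of the Hausdorff distance trivial. Fix $C>0$ with $V_0(\La)\subseteq B(0,C)$; then any ball in $\R^d$ of radius at least $aC$ contains a point of $a\La$, because every point of $\R^d$ is at distance at most $aC$ from the lattice. In each of the three regimes it then suffices to produce, for every $x\in K$, a ball of radius $aC$ that lies inside $K$ and whose center is at controlled distance from $x$: the lattice point inside this ball automatically belongs to $K\cap a\La$.

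For the general topologically regular statement I would combine $K=\mathrm{cl}(\indre K)$ with compactness of $K$. Given $\eps>0$, each $x\in K$ admits an interior point $y_x\in\indre K$ with $|y_x-x|<\eps/2$ and some ball $B(y_x,r_x)\subseteq K$. Extracting a finite subcover $\{B(x_j,\eps/4)\}_{j=1}^N$ of $K$ gives the uniform lower bound $r:=\min_j r_{x_j}>0$, so for $a<r/C$ the ball $B(y_{x_j},aC)\subseteq K$ contains a lattice point within $\eps$ of every $x\in B(x_j,\eps/4)$. Letting $\eps\to 0$ then yields $d_H(K,K\cap a\La)\to 0$.

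For $\delta$-regular $K$ I would use an inner supporting ball property. Since $\reach(\mathrm{cl}(\R^d\setminus K))\ge\delta$ and topological regularity forces $\indre(\mathrm{cl}(\R^d\setminus K))=\R^d\setminus K$, any outer unit normal $u$ of $\mathrm{cl}(\R^d\setminus K)$ at a boundary point $x\in\partial K$ yields a ball $B(x+\delta u,\delta)\subseteq K$ (the standard consequence of positive reach). Sliding from $x$ toward $x+\delta u$ by a distance $\eta=2aC$ to the point $p:=x+2aCu$ exhibits a ball $B(p,2aC)\subseteq B(x+\delta u,\delta)\subseteq K$ with $|p-x|=2aC$, so any lattice point in $B(p,aC)\subseteq B(p,2aC)$ lies in $K$ and is within $3aC$ of $x$. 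Interior points of $K$ are handled either directly (when they are at distance $\ge aC$ from $\partial K$) or via their nearest boundary point (at the cost of one additional $aC$), giving the $O(a)$ bound.

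For topologically regular polyconvex $K=\bigcup_{i=1}^m K_i$ I would first reduce to a decomposition in which each $K_i$ has nonempty interior. If some $x\in\indre K$ belonged only to pieces $K_j$ with empty interior, then inside a small ball $B(x,r)\subseteq K$ the pieces that avoid $x$ would lie at positive distance from $x$, while the finitely many $K_j\ni x$ would be lower-dimensional, hence nowhere dense; shrinking the ball so that the avoiding pieces are missed would force $K$ to contain this sub-ball in a measure-zero set, a contradiction. Thus $\indre K\subseteq\bigcup_{i\in I}K_i$ for $I:=\{i:\indre K_i\neq\emptyset\}$, and taking closures (using $K=\mathrm{cl}(\indre K)$ and closedness of $\bigcup_{i\in I}K_i$) upgrades this to $K=\bigcup_{i\in I}K_i$. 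After this reduction, fix inscribed balls $B(y_i,\rho_i)\subseteq K_i$; by convexity $B((1-t)x+ty_i,\,t\rho_i)\subseteq K_i\subseteq K$ for every $x\in K_i$ and $t\in(0,1]$, so the choice $t=2aC/\rho_i$ produces a ball of radius $2aC$ in $K$ whose center is within $2aC\cdot\diam(K)/\rho$ of $x$ (with $\rho:=\min_i\rho_i$), giving a lattice point in $K\cap a\La$ at distance $O(a)$ from $x$. I expect this reduction step to be the main obstacle: cleanly discarding the lower-dimensional convex pieces without altering $K$ is where topological regularity is essential. Once it is in place, all three cases follow from the same inscribed-ball-plus-covering-radius scheme.
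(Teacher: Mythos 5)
Your proposal is correct and follows essentially the same route as the paper: a covering argument with interior balls for the general topologically regular case, an inner tangent ball supplied by the positive reach of the closed complement in the $\delta$-regular case, and a homothety toward an inscribed ball after discarding the lower-dimensional convex pieces in the polyconvex case, with your constant $C$ playing the role of the covering radius $\mu(\La)$ used in the paper. The only real difference is that you spell out the reduction to full-dimensional pieces and the $\delta$-regular inner-ball property, which the paper simply asserts.
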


\begin{proof} Recall from \cite[p.~311]{barvinok02} that  
$
\mu(\La)=\max_{x\in\R^d}\text{dist}(x,\La)
$ 
is well defined and denotes the covering radius of $\La$. 

Let $\eps>0$ be given. 
Since $K$ is compact, there are  points $x_1,\ldots,x_m\in K$ such that 
$$
K\subseteq\bigcup_{i=1}^m B(x_i,\eps).
$$
Using the fact that $K$ is topologically regular, we conclude that there are points $y_i\in\text{int}(K)\cap \text{int}(B(x_i,2\eps))$ for $i=1,\ldots,m$. Hence, there are $\eps_i\in (0,2\eps)$ such that  $ B(y_i,\eps_i)\subseteq K\cap B(x_i,2\eps)$ for $i=1,\ldots,m$. 
Let $0<a<\min\{\eps_i/\mu(\La) \mid i=1,\ldots,m\}$. Since $\eps_i/a>\mu(\La)$ it follows that $a\La \cap B(y_i,\eps_i)\neq\emptyset$, for  
$i=1,\ldots,m$. Thus we can choose $z_i\in a\La \cap B(y_i,\eps_i)\subseteq a\La\cap K$ for  
$i=1,\ldots,m$. By the triangle inequality, we have $|z_i-x_i|\le \eps_i+2\eps\le 4\eps$, and hence $x_i\in (K\cap a\La)+B(0,4\eps)$, for 
$i=1,\ldots,m$. Therefore, $K\subseteq (K\cap a\La) +B(0,5\eps)$ if $a>0$ is sufficiently small. 

Assume that $K$ is $\delta$-regular, for some $\delta>0$. We choose $0<a<\delta/(2\mu(\La))$. Since $a\mu(\La)<\delta/2$, for any $x\in K$ there is a ball $B(y,a\mu(\La))$ of radius $a\mu(\La)$ such that $x\in B(y,a\mu(\La))\subseteq K$. From $a\La\cap B(y,a\mu(\La))\neq\emptyset$ we 
conclude that there is a point $z\in K\cap a\La$ with $|x-z|\le 2a\mu(\La)$. Hence $x\in (K\cap a\La) +B(0,2a\mu(\La))$, and therefore $d_H(K,K\cap a\La)\le 2a\mu(\La)$. 

Finally, we assume that $K$ is topologically regular and polyconvex. Then $K$ is the union of finitely many compact convex sets with interior points. Hence, for the proof we may assume that  $K$ is convex with $B(0,\rho)\subseteq K$ for a fixed $\rho>0$. 
Choose $0<a<\rho/(2\mu(\La))$ and put $r=2a\mu(\La)<\rho$. If $x\in K$, then $B((1-r/\rho)x,r)\subseteq K$ and $B((1-r/\rho)x,r)$ contains 
a point $z\in a\La$. Since 
$$
|x-z|\le r+({r}/{\rho})|x|\le 2a\mu(\La)\left(1+\text{diam}(K)/\rho\right),
$$
we get
$$
K\subseteq (K\cap a\La) +B\big(0,2a\mu(\La)\left(1+\text{diam}(K)/\rho \right)\big),
$$
which completes the argument. 
\end{proof}

Thus Theorems \ref{converge} and \ref{IVconverge} and Corollary \ref{corNew} together with Lemma \ref{dHbounds} yield the following result.
\begin{corollary}\label{convercor}
If $K $ is compact and topologically regular, then
\begin{align*}
&\lim_{a\to 0_+} d_{bL}(\mathcal{V}_{R}^{r,s}(K;\cdot),\mathcal{V}_{R}^{r,s}(K\cap a\La;\cdot)) = 0,\\
&\lim_{a\to 0_+} \mathcal{V}_{R}^{r,s}(K\cap a\La) = \mathcal{V}_{R}^{r,s}(K).
\end{align*}
If, in addition, $K$ has positive reach, then
\begin{align}\label{multigrid}
&\lim_{a\to 0_+} \hat{\Phi}^{r,s}_k(K\cap a\La) = {\Phi}^{r,s}_k(K).
\end{align}
If $K$ is $\delta$-regular or a topologically regular convex set, then the speed of convergence is $O(a)$ when $r=s=0$ and $O(\sqrt{a})$ otherwise. 
\end{corollary}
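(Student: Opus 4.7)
The plan is to assemble the statement as a direct consequence of the Hausdorff convergence of digital samples established in Lemma~\ref{dHbounds} together with the continuity theorems for the Voronoi tensor measures (Theorems~\ref{converge} and \ref{IVconverge}) and the consistency result of Corollary~\ref{corNew}. Since $K$ is compact, I would first fix $\rho>0$ with $K\subseteq B(0,\rho)$; because $K\cap a\La\subseteq K$, the sample lies in the same ball, so the hypotheses of all three cited results hold with $K_0=K\cap a\La$.

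For the first stated limit, topological regularity together with Lemma~\ref{dHbounds} yields $d_H(K,K\cap a\La)\to 0$ as $a\to 0_+$, and substitution into Theorem~\ref{converge} gives the asserted bounded Lipschitz convergence. The second limit is then an immediate consequence of the first: the constant function $f\equiv 1$ has $|f|_{bL}=1$, so by the very definition of $d_{bL}$ this pseudo-distance dominates the tensor norm of the difference of the total masses $\mathcal{V}_R^{r,s}(K;\R^d)-\mathcal{V}_R^{r,s}(K\cap a\La;\R^d)$.

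Under the additional positive reach assumption, \eqref{multigrid} follows from Corollary~\ref{corNew}: choosing parallel distances $0<R_0<\cdots<R_d<\mathrm{Reach}(K)$ as in the definition of $\hat{\Phi}_k^{r,s}$, the corollary supplies a bound
\[
|\hat{\Phi}_k^{r,s}(K\cap a\La)-\Phi_k^{r,s}(K)|\leq C\, d_H(K,K\cap a\La)^{\alpha},
\]
with $\alpha=1$ when $r=s=0$ and $\alpha=\tfrac12$ otherwise, and the right-hand side tends to $0$ by Lemma~\ref{dHbounds}. For the claimed rates, the same lemma upgrades the Hausdorff bound to $d_H(K,K\cap a\La)=O(a)$ when $K$ is $\delta$-regular or a topologically regular convex body; inserting this into the estimate above produces $O(a)$ in the scalar case and $O(\sqrt a)$ otherwise. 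Both additional hypotheses are compatible with positive reach (convex sets have infinite reach; $\delta$-regular sets have reach at least $\delta$), so the parallel distances can always be chosen below $\mathrm{Reach}(K)$. I expect no substantive obstacle here: the statement is essentially a compilation of previously established estimates, with the only minor point of care being the separate treatment of the scalar case $r=s=0$, which inherits the stronger $O(a)$ rate from Theorem~\ref{IVconverge} rather than the generic $O(\sqrt a)$ rate of Theorem~\ref{converge}.
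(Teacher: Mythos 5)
Your proposal is correct and follows exactly the route the paper takes: the paper proves this corollary simply by combining Lemma \ref{dHbounds} with Theorems \ref{converge} and \ref{IVconverge} and Corollary \ref{corNew}, which is precisely your assembly (including the observation that $K\cap a\La\subseteq K\subseteq B(0,\rho)$ and that the scalar case inherits the $O(a)$ rate). The only detail you fill in beyond the paper's one-line justification is extracting the total-measure convergence from $d_{bL}$ via the test function $f\equiv 1$, which is a valid and natural step.
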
 
The property \eqref{multigrid} means  that   $\hat{\Phi}^{r,s}_k(K\cap a\La)$ is multigrid convergent for the class of sets of positive reach as defined in the introduction. A similar statement about local tensors, but without the speed of convergence, can be made. We omit this here.

\subsection{Possible refinements of the algorithm for digital images}\label{refinement}
We first describe how the number of necessary radii $R_0<R_1<\ldots<R_d$ in \eqref{defEst} can be reduced by one if $s=0$ and $A=\R^d$. Setting $s=0$ and $A=\R^d$ and subtracting
 $(r!)\Phi_d^{r,0}(K)$ on both sides of Equation \eqref{steiner} yields 
\begin{align}\label{modstein}
\int_{K^R\backslash K} p_K(x)^r \,dx = \mathcal{V}_{R}^{r,0}(K)-(r!)\Phi_d^{r,0}(K) = (r!) \sum_{k=1}^d \kappa_{k} R^{k} \Phi_{d-k}^{r,0}(K).
\end{align}
As mentioned in Section \ref{volten}, the volume tensor  $\Phi_d^{r,0}(K)$ can be estimated by 
	$\hat{\phi}_d^{r,0}(K\cap a\La)$.
We may take $\mathcal{V}_{R}^{r,0}(K\cap a\La)-(r!)\hat{\phi}_d^{r,0}(K\cap a\La)$ as an improved estimator for \eqref{modstein}.
This corresponds to replacing the integration domains $B(x,R)\cap V_x(K\cap a\La)$ in \eqref{algorithm2} by 
\[
(B(x,R)\cap V_x(K\cap a\La))\backslash V_x(a\La).
\] 
This makes sense since $V_x(a\La)$ is likely to be contained in $K$ while the left-hand side of  \eqref{modstein} is an integral over $K^R\backslash K$.  The Minkowski tensors can now be isolated from only  $d$ equations of the form \eqref{modstein} with $d$ different values of $R$.

We now suggest a slightly modified estimator for the Minkowski tensors satisfying the same convergence results as $\hat{\Phi}_k^{r,s}(K\cap a\La)$ but where the  number of summands in \eqref{algorithm2} is considerably reduced. As the volume tensors can easily be estimated with the estimators in  Section \ref{volten}, we focus on the tensors with $k<d$.  

Let $K$ be a compact set.  We define the {\em Voronoi neighborhood} $N_\La(0)$ of $0$ to be the set of points $y\in \La$ such that 
the Voronoi cells $V_0(\La)$ and $V_y(\La)$ of $0$ and $y$, respectively, have exactly one common $(d-1)$-dimensional face. 
Similarly, for $z\in \La$ the Voronoi neighborhood $N_\La(z)$ of $z$ is defined, and thus clearly $N_\La(z)=z+N_\La(0)$. 
When $\La\subset \R^2$ is the standard lattice, $N_\La(z)$ consists of the four points in $\La$ that are neighbors of $z$ in the usual $4$-neighborhood \cite{OM}. 
Define
$I(K\cap a\La)$ to be the set of points $z\in K\cap a\La$ such that $N_{a\La}(z)\subseteq K\cap a\La$.  
 The relative complement $B(K\cap a\La)=(K\cap a\La)\setminus I(K\cap a\La)$ of $I(K\cap a\La)$ can be considered as the set of lattice points in $K\cap a\La$ that are close to the boundary of the given set $K$. 

 We modify \eqref{algorithm2} by removing contributions from $I(K\cap a\La)$ and define
 \begin{equation}\label{algorithm3}
 \tilde{\mathcal{V}}_{R}^{r,s}(K\cap a\La ;A)= \sum_{x\in B(K  \cap a\La) \cap A } x^r \int_{B(x,R)\cap V_x(K\cap a\La)}  (y-x)^s\, dy.
 \end{equation}
Assuming that $K$ has positive reach, let $0<R_0<R_1<\ldots<R_d< \textrm{Reach}(K)$. We write again $K_0$ for $K\cap a\La$. Then we obtain the estimators
\begin{align}
\begin{pmatrix}
{\tilde{\Phi}}_{d}^{r,s}(K_0;A\times S^{d-1})\\
\vdots
\\
{\tilde{\Phi}}_{0}^{r,s}(K_0;A\times S^{d-1})
\end{pmatrix}
=\left(A_{R_0,\ldots,R_d}^{r,s}\right)^{-1}
\begin{pmatrix}
\tilde{\mathcal{V}}_{R_0}^{r,s}(K_0;A)\\
\vdots
\\
\tilde{\mathcal{V}}_{R_d}^{r,s}(K_0;A)
\end{pmatrix}\label{defEstcheck}
\end{align}
with $A_{R_0,\ldots,R_d}^{r,s}$ given by \eqref{matrixA}. 

Working with $\tilde{\mathcal{V}}_{R}^{r,s}(K\cap a\La;A)$ reduces the workload  considerably. For instance, when $K$ is $\delta$-regular or polyconvex and topologically regular, the number of elements in $I(K\cap a\La)$ increases with $a^{-d}$, whereas the number of elements in $B(K  \cap a\La)$ only increases with $a^{-(d-1)}$ as $a\to 0_+$. The set $I(K\cap a\La)$ can be obtained from the digital image of $K$ in linear time using a linear filter.
 Moreover, we have the following  convergence result.
 
 \begin{proposition} 
Let $K$ be a topologically regular compact set with positive reach and let $C$ be such that $V_0(\La)\subseteq B(0,C)$. If $A$ is a Borel set in $\R^d$ and $aC<R_0<R_1<\ldots<R_d<\mathrm{Reach}(K)$ and $K_0=K\cap a\La$, then 
\[
\tilde{\Phi}_{k}^{r,s}(K_0;A\times S^{d-1})=\hat{\Phi}_{k}^{r,s}(K_0;A\times S^{d-1})
\]
for all $k\in\{0,\ldots,d-1\}$, whenever $s=0$ or $s$ is odd. 
If $s$ is even and $k\in\{0,\ldots,d-1\}$, then
\begin{equation*}
\lim_{a\to 0_+} \tilde{\Phi}_{k}^{r,s}(K_0;A\times S^{d-1})=\lim_{a\to 0_+}\hat{\Phi}_{k}^{r,s}(K_0;A\times S^{d-1}).
\end{equation*}
 \end{proposition}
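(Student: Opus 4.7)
The plan is to compute the difference $\mathcal{V}_{R_i}^{r,s}(K_0;A)-\tilde{\mathcal{V}}_{R_i}^{r,s}(K_0;A)$ coming from the points in $I(K_0)\cap A$ that are dropped from the sum in \eqref{algorithm2}, and then to analyze how this difference is transformed by the fixed inverse matrix $(A_{R_0,\ldots,R_d}^{r,s})^{-1}$.

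First I would establish the key geometric identity that for $x\in I(K_0)$ one has $V_x(K_0)=V_x(a\La)$. The inclusion $V_x(K_0)\supseteq V_x(a\La)$ is immediate because $K_0\subseteq a\La$ imposes fewer constraints. The reverse inclusion uses the standard fact that a Voronoi cell is the intersection of the half-spaces $H_{x,z}=\{y:|y-x|\le|y-z|\}$ taken only over the Voronoi neighbors $z\in N_{a\La}(x)$, together with $N_{a\La}(x)\subseteq K_0$ by the defining property of $I(K_0)$. Since $V_x(a\La)\subseteq B(x,aC)\subseteq B(x,R_i)$ by the hypothesis $aC<R_0$, the $R_i$-clipped Voronoi cell is exactly $V_x(a\La)=x+V_0(a\La)$. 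Translating the integration variable, the contribution of $x\in I(K_0)\cap A$ to $\mathcal{V}_{R_i}^{r,s}(K_0;A)$ equals $x^r\odot J_s$, where $J_s:=\int_{V_0(a\La)}y^s\,dy$ is a symmetric $s$-tensor depending on $a$ and $\La$ but not on $R_i$. Hence the correction vector $(\mathcal{V}_{R_i}^{r,s}(K_0;A)-\tilde{\mathcal{V}}_{R_i}^{r,s}(K_0;A))_{i=0}^d$ is a constant vector $(C_A,\ldots,C_A)^\top$ with $C_A=\bigl(\sum_{x\in I(K_0)\cap A}x^r\bigr)\odot J_s$.

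It then remains to inspect $(A_{R_0,\ldots,R_d}^{r,s})^{-1}(C_A,\ldots,C_A)^\top$ in the three subcases. For $s=0$, the first column of $A_{R_0,\ldots,R_d}^{r,0}$ equals $r!(1,\ldots,1)^\top$ (using $\kappa_0=1$), so $A_{R_0,\ldots,R_d}^{r,0}\,(C_A/r!,0,\ldots,0)^\top=(C_A,\ldots,C_A)^\top$; the correction therefore only shifts the component indexed by $\hat{\Phi}_d$ and leaves every $k<d$ component untouched, giving the exact identity $\tilde{\Phi}_k^{r,0}=\hat{\Phi}_k^{r,0}$. For odd $s$, the lattice cell $V_0(a\La)$ is centrally symmetric about the origin and $y\mapsto y^s$ is an odd tensor-valued function, so $J_s=0$, hence $C_A=0$ and $\tilde{\Phi}_k^{r,s}=\hat{\Phi}_k^{r,s}$ holds identically. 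For even $s>0$, scaling gives $V_0(a\La)=aV_0(\La)$ and therefore $|J_s|=O(a^{d+s})$; combined with the count $|I(K_0)|=O(a^{-d})$ and the uniform bound $|x^r|\le\rho^r$ for $x\in B(0,\rho)$, this yields $|C_A|=O(a^s)\to 0$, and applying the constant matrix $(A_{R_0,\ldots,R_d}^{r,s})^{-1}$ gives $|\tilde{\Phi}_k^{r,s}-\hat{\Phi}_k^{r,s}|=O(a^s)\to 0$.

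The main obstacle is the geometric identity $V_x(K_0)=V_x(a\La)$ for $x\in I(K_0)$, which rests on recognizing that only the Voronoi neighbors contribute non-redundant half-space constraints to the cell; once this reduction is made, the rest is algebraic manipulation on the Vandermonde-type matrix applied to a constant vector, combined with the central symmetry (for odd $s$) and a simple scaling estimate (for even $s>0$) of the lattice Voronoi cell.
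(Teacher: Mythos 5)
Your proposal is correct and follows essentially the same route as the paper: you isolate the contribution of the points of $I(K_0)\cap A$ as an $R$-independent correction $\bigl(\sum_{x\in I(K_0)\cap A}x^r\bigr)\odot\int_{V_0(a\La)}y^s\,dy$, kill it by central symmetry for odd $s$, absorb it into the $\hat\Phi_d$-component via $(A^{r,0}_{R_0,\ldots,R_d})^{-1}(1,\ldots,1)^\top=((r!)^{-1},0,\ldots,0)^\top$ for $s=0$, and bound it by $O(a^{s})$ via the scaling $V_0(a\La)=aV_0(\La)$ and the $O(a^{-d})$ lattice-point count for even $s>0$. The only difference is that you spell out, via Voronoi's relevant-vectors fact, the identity $B(x,R_i)\cap V_x(K_0)=V_x(a\La)$ for $x\in I(K_0)$, which the paper simply asserts.
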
 

 \begin{proof} Let $aC<R<\mathrm{Reach}(K)$. 
 For $x\in I(K\cap a\La)$, we have  
 \[
 B(x,R)\cap V_{x}(K\cap a\La)=V_{x}(a\La),
 \]
  so the contribution of 
 $x$ to the sum in \eqref{algorithm2} is $(s!)x^r\Phi^{s,0}_d(V_{0}(a\La))$. It follows that 
 \begin{align}\label{Vred}
 {\mathcal{V}}_{R}^{r,s}(K\cap a\La ;A)-\tilde{\mathcal{V}}_{R}^{r,s}(K\cap a\La ;A)=
  (s!)\Phi^{s,0}_d(V_{0}(a\La))\sum_{x\in I(K\cap a\La)\cap A}x^r.
 \end{align}
 For odd $s$ we have $\Phi^{s,0}_d(V_{0}(a\La))=0$, so the claim follows.  For $s=0$ the right-hand side of 
 \eqref{Vred} does not vanish, but it is independent of $R$. A combination of 
 \[
 \left(A_{R_0,\ldots,R_d}^{r,0}\right)^{-1}
 \begin{pmatrix}
1\\1\\
 \vdots
 \\
1 \end{pmatrix}=
 \begin{pmatrix}
(r!)^{-1}\\
0\\
 \vdots
 \\
 0 \end{pmatrix},
 \]
with \eqref{Vred}, \eqref{defEst} and \eqref{defEstcheck} gives the claim. 

For even $s>0$, we have that $\Phi^{s,0}_d(V_{0}(a\La))=a^{d+s}\Phi^{s,0}_d(V_{0}(\La))$, while 
\begin{align*}
\left|\sum_{x\in I(K\cap a\La)\cap A}x^r \right| &\leq \sum_{x\in I(K\cap a\La)}|x|^r  \\
&\leq \sup_{x\in K}|x|^r\sum_{x\in I(K\cap a\La)} 
 \left(a^{d}{\mathcal H}^d(V_{0}(\La))\right)^{-1}{\mathcal H}^d(V_{x}(a\La))\\
&\leq \sup_{x\in K}|x|^r \cdot a^{-d}\cdot {\mathcal H}^d(V_0(\La))^{-1}\cdot \mathcal{H}^d(K^{aC}).
\end{align*}
 Therefore, the expression on the right-hand side of \eqref{Vred} converges to $0$. 
 \end{proof}

It should be noted that a similar modification for $\overline \Phi_k^{r,s}$ is not necessary. In fact the modified Voronoi tensor measure \eqref{modify} with $K=K_0$ has the advantage that small Voronoi cells that are completely contained in the $R_0/2 $-parallel set of $K\cap a\La$ do not contribute. In particular, contributions from $I(K\cap a\La)$ are automatically ignored when $a$ is sufficiently small.

\section{Comparison to known estimators}\label{known}
Most {existing} estimators of intrinsic volumes \cite{digital,lindblad,OM} and Minkowski tensors \cite{turk,mecke} are $n$-local for some $n\in \N$. The idea is to look at all $n\times \dotsm \times n$ pixel blocks in the image and count how many times each of the $2^{n^d}$ possible configurations of black and white points occur. Each configuration is weighted by an element of $\mathbb{T}^{r+s}$ and $\Phi^{r,s}_k(K)$ is estimated as a weighted sum of the configuration counts. It is known that estimators  of this type  for intrinsic volumes other than ordinary volume are not multigrid convergent, even when $K$ is known to be a convex polytope; see \cite{am3}.
{It is not difficult to see that there cannot be a multigrid convergent $n$-local estimator for the (even rank) tensors $\Phi_k^{0,2s}(K)$ with $k=0,\ldots,d-1$, $s\in\mathbb{N}$, for polytopes $K$, either. In fact, repeatedly taking the trace of such an estimator would lead to a multigrid convergent $n$-local estimator of the $k$th intrinsic volume, in contradiction to \cite{am3}.}

The algorithm presented in this paper is not $n$-local for any $n\in \N$. It is  required in the convergence proof that the parallel radius $R$ is fixed while the resolution $a^{-1}$ goes to infinity. {The non-local operation in the definition of our estimator is the calculation of the Voronoi diagram.} 
The computation time for Voronoi diagrams of $k$ points is $O(k\log k + k^{\lfloor d/2\rfloor})$, see \cite{chazelle}, which is somewhat slower than  $n$-local algorithms for which the computation time for $k$ data points is $O(k)$. The computation time can be improved by ignoring interior points as discussed in Section \ref{refinement}.

The idea to base digital estimators for intrinsic volumes on an inversion of the Steiner formula as in \eqref{matrix} has occurred before in \cite{spodarev,jan}. In both references, the authors  define  estimators for polyconvex sets which  are not necessarily of positive reach. This more ambitious aim leads to problems with the convergence.

In \cite{spodarev}, the authors use  a version of the Steiner formula for polyconvex sets given in terms of the Schneider index, see \cite{schneider}. Since its definition is, however, $n$-local in nature, the authors choose an $n$-local algorithm to estimate it. As already mentioned, such algorithms are not multigrid convergent. 

In \cite{jan}, it is used that the intrinsic volumes of a polyconvex set can, on the one hand, be approximated by those of a parallel set with small parallel radius, and on the other hand, the closed complement of this parallel set has positive reach, so that its intrinsic volumes can be  computed via the Steiner formula. The authors employ a discretization of the parallel volumes of digital images, but without showing that the convergence is preserved. 

It is likely that the ideas of the present paper combined with the ones of \cite{jan} could be used to construct {multigrid} convergent digital algorithms for polyconvex sets. The price for this is that the notion of convergence in \cite{jan} is slightly artificial for practical purposes, requiring very small parallel radii in order to get good approximations and at the same time large radii compared to resolution.

In \cite{svane}, $n$-local  algorithms based on grey-valued images are suggested. They are shown to converge to the true value when the resolution {tends} to infinity.  However, they only apply to surface and certain mean curvature tensors. Moreover, they are hard to apply in practice, since they require detailed information about the underlying point spread function {which specifies the representation of the object as grey-value image. If grey-value images are given, the}
 algorithm of the present paper could be applied to thresholded images, but there may be more efficient ways to exploit the additional information of the grey-values.
\bigskip

\section*{acknowledgements}
We wish to thank the referees for carefully reading the paper and making helpful suggestions for improvements. The first author was supported in part by DFG grants FOR 1548 and HU 1874/4-2. The third author was supported by a grant from the Carlsberg Foundation.
The second and third authors were supported by the Centre for Stochastic
Geometry and Advanced Bioimaging, funded by the Villum Foundation.

\end{document}